\newtheorem{lem}{Lemma}[section]
\newtheorem{thm}[lem]{Theorem}
\newtheorem{prop}[lem]{Proposition}
\numberwithin{equation}{section}
\newtheorem*{cor*}{Corollary}
\newtheorem*{thm*}{Theorem}
\theoremstyle{definition}
\newtheorem{defi}{Definition}[section]
\theoremstyle{remark}
\DeclareMathOperator{\Tr}{Tr}
\newcommand{\N}{\mathbb{N}}
\newcommand{\Z}{\mathbb{Z}}
\newcommand{\Q}{\mathbb{Q}}
\newcommand{\R}{\mathbb{R}}
\newcommand{\lf}{\left\lfloor}
\newcommand{\rf}{\right\rfloor}
\renewcommand{\lvert}{\left\vert}
\renewcommand{\rvert}{\right\vert}
\renewcommand{\lVert}{\left\Vert}
\renewcommand{\rVert}{\right\Vert}
\newcommand{\house}[1]{\mathrm{H}\left(#1\right)}
\title[Asymptotic normality in
canonical number systems]{Asymptotic normality of additive functions on polynomial
  sequences in canonical number systems}
\author[M. G. Madritsch]{Manfred G. Madritsch}
\address[M. G. Madritsch]{
Department of Analysis and Computational Number Theory\newline
\indent Graz University of Technology\newline
\indent A-8010 Graz, Austria}
\email{madritsch@math.tugraz.at}
\author[A. Peth{\H o}]{Attila Peth{\H o}}
\address[A. Peth{\H o}]{Department of Computer Science, University of Debrecen,\newline
\indent Number Theory Research Group,\newline
\indent Hungarian Academy of Sciences and University of Debrecen
\newline
\indent P.O. Box 12, H-4010 Debrecen, Hungary}
\email{petho.attila@inf.unideb.hu}
\subjclass[2000]{11K16 (11A63,60F05)}
\keywords{additive functions, canonical number systems, exponential sums}
\date{\today}
\begin{document}

\begin{abstract}
The objective of this paper is the study of functions which only act on the
digits of an expansion. In particular, we are interested in the asymptotic
distribution of the values of these functions. The presented result is an
extension and generalization of a result of Bassily and K\'atai to number
systems defined in a quotient ring of the ring of polynomials over the
integers.
\end{abstract}

\maketitle

\section{Introduction}\label{sec:introduction}
In this paper we investigate the asymptotic behaviour of $q$-additive
functions. But before we start we need an idea of additive functions
and the number systems they are living in. Note that a function $f$ is
said to be \textit{$q$-additive} if it acts only on the $q$-adic digits,
\textit{i.e.}, $f(0)=0$ and
\[
  f(n)=\sum_{h=0}^\ell f(a_h(n)q^h)\quad\text{for}\quad n=\sum_{h=0}^\ell a_h(n)q^h,
\]
where $a_h(n)\in\mathcal{N}:=\{0,\ldots,q-1\}$ are the \textit{digits} of the
$q$-adic expansion of $n$.

One of the first results dealing with the asymptotic behavior of such a
$q$-additive function is the following, which is due to Bassily and
K\'atai~\cite{Bassily_Katai1995:distribution_values_q}.
\begin{thm*}
Let $f$ be a $q$-additive function such that $f(aq^h)=\mathcal{O}(1)$ as
$h\to\infty$ and $a\in\mathcal{N}$. Furthermore let
\[
  m_{h,q}:=\frac1q\sum_{a\in\mathcal{N}}f(aq^h),\quad
  \sigma^2_{h,q}:=\frac1q\sum_{a\in\mathcal{N}}f^2(aq^h)-m^2_{h,q},
\]
and
\[
  M_q(x):=\sum_{h=0}^Nm_{h,q},\quad
  D^2_q(x)=\sum_{h=0}^N\sigma^2_{h,q}
\]
with $N=[\log_qx]$. Assume that $D_q(x)/(\log x)^{1/3}\to\infty$ as
$x\to\infty$ and let $P$ be a polynomial with integer coefficients, degree
$d$ and positive leading term. Then, as $x\to\infty$,
\[
\frac1x\#\left\{n<x\middle\vert
  \frac{f(P(n))-M_q(x^d)}{D_q(x^d)}<y\right\}\to\frac1{\sqrt{2\pi}}\int_{-\infty}^y\exp(-x^2)\mathrm{d}x.
\]
\end{thm*}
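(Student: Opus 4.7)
I would prove the theorem via the method of moments: show that for every positive integer $k$, the $k$-th empirical moment
\[
\frac{1}{x}\sum_{n<x}\left(\frac{f(P(n))-M_q(x^d)}{D_q(x^d)}\right)^{k}
\]
converges to the $k$-th moment of the standard normal distribution, and conclude by the Fr\'echet--Shohat theorem. Since $f$ is $q$-additive with $f(aq^h)=\mathcal{O}(1)$, and $P(n)$ has $dN+\mathcal{O}(1)$ digits (with $N=[\log_qx]$), the random variable $f(P(n))-M_q(x^d)$ is a sum of $dN$ bounded summands indexed by digit positions. Expanding the $k$-th power reduces the task to computing joint moments, i.e.\ to understanding the joint law of the tuple $(a_{h_1}(P(n)),\dots,a_{h_k}(P(n)))$ as $n$ ranges over $\{0,\dots,\lfloor x\rfloor-1\}$.

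To handle this joint law I would split the digit range $\{0,\dots,dN\}$ into three parts controlled by a small parameter $\varepsilon>0$: a \emph{low} range $h\le\varepsilon N$, a \emph{middle} range $\varepsilon N<h<(d-\varepsilon)N$, and a \emph{high} range $h\ge(d-\varepsilon)N$. The low digits of $P(n)$ depend only on $n$ modulo a small power of $q$, whereas the high digits are essentially determined by the leading monomial $a_d n^d$ of $P$, hence by the position of $n$ in $[0,x)$. Both bands contribute $\mathcal{O}(\varepsilon\log x)$ to $f(P(n))$, which is negligible after dividing by $D_q(x^d)\gg(\log x)^{1/3}$ and raising to the $k$-th power. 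The Gaussian limit must therefore come from the middle digits.

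For the middle band I would show that the digits behave jointly like independent uniformly distributed variables on $\mathcal{N}$. Using the identity $a_h(m)=\lfloor q\{m/q^{h+1}\}\rfloor$ and Fourier expansion of indicators of single digit values on $\Z/q\Z$, every joint-moment calculation reduces to Weyl-type exponential sums
\[
\sum_{n<x}\exp\!\left(2\pi i\,\alpha P(n)\right),\qquad \alpha=\sum_{i=1}^{k}\frac{t_i}{q^{h_i+1}},\ t_i\in\{1,\dots,q-1\}.
\]
When all $h_i$ lie in the middle range, the leading coefficient $a_d\alpha$ of $\alpha P(n)$ admits a rational approximation $a/q^r$ with $\gcd(a,q)=1$ and $x^{\varepsilon}\ll q^r\ll x^{d-\varepsilon}$, so Weyl's inequality yields a power saving of the form $x^{1-\delta}$ for some $\delta=\delta(d,\varepsilon)>0$. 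Summed over the $\mathcal{O}((\log x)^{k})$ choices of $(h_i,t_i)$ this remains negligible, and only ``diagonal'' $k$-tuples, in which the $h_i$ pair up, survive and reproduce the Gaussian moments $(2m)!/(2^{m}m!)$ for $k=2m$ and $0$ for $k$ odd.

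The main obstacle is precisely this uniform Weyl estimate. One must control possible cancellations in $\alpha=\sum_i t_i/q^{h_i+1}$ as the indices $(h_i,t_i)$ vary independently, and extract a Diophantine approximation of $a_d\alpha$ that is strong enough for Weyl's inequality yet uniform in the middle range so that the saving $x^{-\delta}$ survives both the summation over digit positions and the truncation errors at the low and high boundaries. The remaining ingredients---the combinatorial identification of diagonal tuples, the reduction of floor-function digit indicators to genuine trigonometric polynomials, and the passage from moment convergence to convergence in distribution---are then routine.
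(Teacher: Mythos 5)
Your overall architecture (method of moments, truncation, Fourier-analytic reduction to exponential sums over $P(n)$, Fr\'echet--Shohat) is the correct strategy and is essentially the one used in the paper (and in Bassily--K\'atai). However, there is a genuine quantitative gap in the truncation step that propagates into the Weyl-sum step.

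You cut off the low and high bands of width $\varepsilon N$ with a fixed small $\varepsilon>0$ and claim the resulting error $\mathcal{O}(\varepsilon\log x)$ is negligible after dividing by $D_q(x^d)\gg(\log x)^{1/3}$. But $\varepsilon\log x/(\log x)^{1/3}=\varepsilon(\log x)^{2/3}\to\infty$ for fixed $\varepsilon$; even if you only track variances, the discarded bands carry a fixed positive proportion $\asymp\varepsilon$ of the total variance, so after normalization their contribution is of order $\sqrt\varepsilon$, not $o(1)$. Thus the truncated functional is not interchangeable with the original, and the moment comparison breaks. To make the truncation admissible you must shrink the bands to width $o(D_q(x^d))$, i.e.\ (under the stated hypothesis) $o((\log x)^{1/3})$; the standard and convenient choice, used in the paper's Section~\ref{sec:proof-thm}, is $\mathcal{O}(\log N)=\mathcal{O}(\log\log x)$ digits.

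Once you shrink the bands this much, the second half of your argument no longer applies as stated. With the first surviving position $h_1\gg\log\log x$, the natural denominator $q^{h_{\max}+1}$ of $\alpha=\sum_i t_i/q^{h_i+1}$ is as small as $(\log x)^{\mathcal{O}(1)}$, so you cannot invoke Weyl's inequality in the form ``denominator between $x^{\varepsilon}$ and $x^{d-\varepsilon}$ yields a power saving.'' The exponential sum estimate has to produce a polylogarithmic saving, uniformly over positions as close as $\mathcal{O}(\log\log x)$ to the endpoints, with a rational approximation of $a_d\alpha$ whose quality degrades as $h_1$ shrinks. This is exactly what the paper's Proposition~\ref{mani:prop:weyl} supplies (a bound $\ll T^n(\log T)^{-t\sigma_0}$ under the hypothesis $C_1\log L\le l_1<\dots<l_h\le dL-C_2\log L$), and it is the technical heart of the argument rather than a routine application of Weyl's inequality. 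So your outline should be repaired by (i) replacing the bands $\varepsilon N$ by bands of width $\mathcal{O}(\log\log x)$, and (ii) replacing the power-saving Weyl bound by a refined logarithmic-saving estimate of Bassily--K\'atai/Madritsch type valid near the endpoints; you should also be careful about the claim $\gcd(a,q)=1$ for the numerator of $\alpha$, which can fail when $q$ is composite or several $h_i$ coincide, though this affects the denominator only by a bounded factor.
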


A first step towards a generalization of this concept is based on number
systems living in an order in an algebraic number field.
\begin{defi}
Let $\mathcal{R}$ be an integral domain, $b\in\mathcal{R}$, and
$\mathcal{N}=\{n_1,\ldots,n_m\}\subset\Z$. Then we call the pair
$(b,\mathcal{N})$ a \textit{number system} in $\mathcal{R}$ if every
$g\in\mathcal{R}$ admits a unique and finite representation of the
form
\begin{gather*}
  g=\sum_{h=0}^\ell a_h(g)b^h\quad\text{with}\quad a_h(g)\in\mathcal{N}
\end{gather*}
and $a_h(g)\neq0$ if $h\neq0$. We call $b$ the \textit{base} and
$\mathcal{N}$ the \textit{set of digits}.

If $\mathcal{N}=\mathcal{N}_0=\{0,1,\ldots,m\}$ for $m\geq1$ then we
call the pair $(b,\mathcal{N})$ a \textit{canonical number system}.
\end{defi}

When extending the number system to the complex plane one has to face
effects such as amenability, \textit{i.e.}, there may exist two or more
different expansions of one number. In fact, one can construct a graph (the
connection graph) which characterizes all the amenable expansions. This has
been done by M\"uller \textit{et al.}
\cite{mueller01:_fract_proper_of_number_system} (with a direct approach) and by
Scheicher and Thuswaldner \cite{mr1900260} (consideration of the odometer).

A different view on digits in number systems is done by normal numbers. These
are numbers in which expansion every possible block occurs asymptotically
equally often. Constructions of such numbers have been considered by Dumont
\textit{et al.} \cite{mr1737265} and the first author in
\cite{Madritsch2007:note_on_normal,Madritsch:generating_normal_numbers}

In this paper we mainly concentrate on additive functions. Thus we define
additive functions in these number systems as follows.
\begin{defi}
Let $(b,\mathcal{N})$ be a number system in the integral domain
$\mathcal{R}$. A function $f$ is called $b$-\textit{additive} if $f(0)=0$ and
\begin{gather*}
f(g)=\sum_{h\geq0}f(a_h(g)b^h)\quad\text{for}\quad
  g=\sum_{h=0}^\ell a_h(g)b^h.
\end{gather*}
\end{defi}

The simplest version of an additive function is the
sum-of-digits function $s_b$ defined by
\[
s_b(g):=\sum_{h\geq0}a_k(g).
\]

The result by Bassily and K\'atai was first generalized to number systems in
the Gaussian integers by Gittenberger and Thuswaldner \cite{mr1796518} who
gained the following

\begin{thm*}
Let $b\in\Z[i]$ and $(b,\mathcal{N})$ be a canonical number system in $\Z[i]$. Let $f$ be a
$b$-additive function such that $f(ab^h)=\mathcal{O}(1)$ as $h\to\infty$ and
$a\in\mathcal{N}$. Furthermore let
\[
  m_{h,b}:=\frac1{\mathrm{N}(b)}\sum_{a\in\mathcal{N}}f(ab^h),\quad
  \sigma^2_{h,b}:=\frac1{\mathrm{N}(b)}\sum_{a\in\mathcal{N}}f^2(ab^h)-m^2_{h,b},
\]
and
\[
  M_b(x):=\sum_{h=0}^Lm_{h,b},\quad
  D^2_b(x)=\sum_{h=0}^L\sigma^2_{h,b}
\]
with $\mathrm{N}$ the norm of an element over $\Q$ and
$L=[\log_{\mathrm{N}(b)}x]$.

Assume that $D_b(x)/(\log x)^{1/3}\to\infty$ as $x\to\infty$ and let $P$ be
a polynomial of degree $d$ with coefficients in $\Z[i]$. Then, as $N\to\infty$,
\[
\frac1{\#\left\{z\in\Z[i]\middle\vert\mathrm{N}(z)<N\right\}}
\#\left\{\mathrm{N}(z)<N\middle\vert\frac{f(P(z))-M_b(N^d)}{D_b(N^d)}<y
\right\}\to\frac1{\sqrt{2\pi}}\int_{-\infty}^y\exp(-x^2)\mathrm{d}x,
\]
where $\mathrm{N}$ is the norm of an element over $\Q$ and $z$ runs over the
Gaussian integers.
\end{thm*}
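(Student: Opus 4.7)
The plan is to follow the moment method of Bassily and K\'atai, with the norm $\mathrm{N}(b)$ playing the role of the modulus $q$ and with the necessary exponential sum estimates adapted to sums over Gaussian integers in the disk $\{z\in\Z[i]\colon \mathrm{N}(z)<N\}$. Set $L=[d\log_{\mathrm{N}(b)}N]$, which is the maximal length of the $b$-adic expansion of $P(z)$ for such $z$. The strategy is to show that, after centering and normalizing, the only digits of $P(z)$ that matter lie in a "middle" range, and that across the sum over $z$ these middle digits behave like independent uniform samples from $\mathcal{N}$.

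First I would prove a truncation lemma: writing $f(P(z))=\sum_{h=0}^{L}f(a_h(P(z))b^h)$, split the range into a head $[0,\alpha L]$, a middle $I=[\alpha L,L-\beta\sqrt L]$, and a tail $[L-\beta\sqrt L,L]$. Using $f(ab^h)=\mathcal{O}(1)$ together with the assumption $D_b(x)/(\log x)^{1/3}\to\infty$, the contributions of head and tail to the normalized sum are $o(1)$ in second mean, so it suffices to treat $\widetilde f(P(z)):=\sum_{h\in I}f(a_h(P(z))b^h)$. Second, for each fixed $k\geq 1$ I would compute the $k$-th centered moment
\[
\frac{1}{\#\{z\colon \mathrm{N}(z)<N\}}\sum_{\mathrm{N}(z)<N}\bigl(\widetilde f(P(z))-\widetilde M_b(N^d)\bigr)^{k}
\]
and show it converges to $D_b(N^d)^{k}\mu_k$, where $\mu_k$ is the $k$-th Gaussian moment. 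Expanding the $k$-th power produces $k$-fold sums indexed by $(h_1,\ldots,h_k)\in I^k$, and the task reduces to detecting the digits $a_{h_j}(P(z))$ via characters of $\Z[i]/b^{h_j+1}\Z[i]$.

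The main obstacle, and the technical core, is a Weyl-type bound for the exponential sums
\[
S_h(u)=\sum_{\mathrm{N}(z)<N}e\!\left(\frac{\Tr(u\,P(z))}{\mathrm{N}(b)^{h+1}}\right),
\qquad u\in\Z[i]\setminus b\Z[i],
\]
that is uniform for $h\in I$, where $e(t)=\exp(2\pi i t)$ and $\Tr$ denotes the trace from $\Q(i)$ to $\Q$. Writing $z=x+iy$ and applying Weyl differencing $d-1$ times in each of $x$ and $y$ reduces $S_h(u)$ to linear exponential sums whose frequencies have denominator controlled by $\mathrm{N}(b)^{h+1}$; the condition $h\in[\alpha L,L-\beta\sqrt L]$ ensures that these denominators are neither too small (so the rational approximation is nontrivial) nor too close to $N$ (so that Koksma-type estimates on the linear sums produce genuine cancellation). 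Once $S_h(u)=o(N^2/\mathrm{N}(b)^{h})$ uniformly is established, the joint distribution of $(a_{h_1}(P(z)),\ldots,a_{h_k}(P(z)))$ under the uniform measure on $\{\mathrm{N}(z)<N\}$ factorizes asymptotically into the product of its marginals, each of which is uniform on $\mathcal{N}$. The $k$-th moment of $\widetilde f(P(z))-\widetilde M_b(N^d)$ then reduces, up to a $o(D_b(N^d)^k)$ error, to a sum over ordered multi-indices with all $h_j$ distinct, which by the multinomial identity yields exactly $D_b(N^d)^k\mu_k$ when $k$ is even and $o(D_b(N^d)^k)$ when $k$ is odd. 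Fr\'echet's theorem on convergence of moments then yields the claimed weak convergence to the standard Gaussian law.
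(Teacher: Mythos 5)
Your overall architecture (truncate, compute moments, Weyl-sum bound on a middle range of positions, Fr\'echet--Shohat) matches the paper's strategy, but there are two genuine gaps.

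\textbf{The head truncation is too large.} You cut the head at $\alpha L$ with $\alpha$ a fixed constant, so the discarded portion contributes $\Theta(L)\asymp\Theta(\log x)$ to both the mean and the variance. The hypothesis is only $D_b(x)/(\log x)^{1/3}\to\infty$, i.e.\ $D_b^2(x)\gg(\log x)^{2/3}$, so $\Theta(\log x)$ is \emph{not} $o\bigl(D_b^2(x)\bigr)$ and the truncated statistic is not asymptotically equivalent to the original one. The paper (Proposition \ref{mani:centralprop}, condition \eqref{mani:l_bounds}) cuts only $O(\log L)=O(\log\log x)$ positions at each end; that is exactly what the $(\log x)^{1/3}$ (or, in the generalization, $(\log x)^\varepsilon$) hypothesis is calibrated to absorb. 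Your tail cut of $\beta\sqrt L$ would be harmless, but the linear head cut breaks the reduction. You need to replace $\alpha L$ by $C_1\log L$.

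\textbf{Detecting digits by residue characters is not as straightforward as you suggest, and the claimed saving is too strong.} It is true that $a_h(g)$ is determined by $g\bmod b^{h+1}$, but the set of residues with $a_h=a$ is not a single arithmetic progression; embedded in $\R^2$ it is (a translate of) the fractal fundamental domain $\mathcal F$ of the matrix number system, and its indicator does not have a usable closed-form Fourier expansion on $\Z[i]/b^{h+1}\Z[i]$. This is precisely why the paper (following Gittenberger--Thuswaldner) replaces the sharp indicator by a Urysohn function $u_a$ supported near $\mathcal F_a$, expands \emph{that} in a Fourier series on $\R^n$, controls its coefficients via Lemma \ref{gt:lem3.2}, and then separately bounds the boundary hits $F_l$ in Proposition \ref{mani:prop:border}. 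Relatedly, your target bound $S_h(u)=o\bigl(N^2/\mathrm N(b)^h\bigr)$ is both misnormalized (there are $\asymp N$, not $N^2$, Gaussian integers with $\mathrm N(z)<N$) and far stronger than what is available: the Weyl-sum estimate (Proposition \ref{mani:prop:weyl}, based on Siegel's Diophantine approximation lemma \ref{siegel:lem}) saves only a power of $\log T$, uniformly over the middle range of $l$, and that polylogarithmic saving is what the Urysohn truncation parameter $v\ll\log\log T$ is tuned to. If you insist on raw character sums and an exponential-in-$h$ saving you will not be able to close the argument. Weyl differencing separately in $x,y$ with $z=x+iy$ is not what the sources do (they difference in the field element using the Siegel machinery), and over the disk $\mathrm N(z)<N$ it would require additional care, but this is a stylistic rather than a fatal difference.

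Finally, a minor slip: in the moment computation it is the \emph{paired}, not the ``all distinct'', multi-indices that contribute the main term; the all-distinct terms vanish by (approximate) independence.
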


This build the base for further considerations of $b$-additive functions
in algebraic number fields in general. Therefore let
$\mathcal{K}=\Q(\beta)$ be an algebraic number field and denote by
$\mathcal{O}_{\mathcal{K}}$ its ring of integers (aka its maximal
order). Furthermore let $\beta\in\mathcal{O}_{\mathcal{K}}$ then we set
$\mathcal{R}=\Z[\beta]$ to be an order in $\mathcal{K}$. We now want to analyze
additive functions for number systems in $\mathcal{R}$.

We need some more parameters in order to successfully generalize the theorem
from above. Thus let $\mathcal{K}^{(\ell)}$ ($1\leq\ell\leq r_1$)
be the real conjugates of $\mathcal{K}$, while $\mathcal{K}^{(m)}$ and
$\mathcal{K}^{(m+r_2)}$ ($r_1< m\leq r_1+r_2$) are the pairs of complex
conjugates of $\mathcal{K}$, where $r_1+2r_2=n$.

For $\gamma\in\mathcal{K}$ we denote by $\gamma^{(i)}$ ($1\leq i\leq
n$) the conjugates of $\gamma$. In order to extend the term of
conjugation to the completion $\overline{\mathcal{K}}$ of
$\mathcal{K}$ we define for $\gamma_j\in\mathcal{K}$ and $x_j\in\R$
($1\leq j\leq n$) $\lambda=\sum_{1\leq j\leq n}x_j\gamma_j$ and
$\lambda^{(i)}:=\sum_{1\leq j\leq   n}x_j\gamma^{(i)}_j$.


Next we have to guarantee that we choose the increasing set for our asymptotic
distribution. In the integer case we had the logarithm of the value, since the
length of expansion growth with the logarithm. Since $\mathcal{R}$ is of
dimension $n$ we need a way to enlarge the area under consideration such that
the expansion growth also in a smooth way. This is motivated by the following
\begin{lem}[{\cite[Theorem]
  {Kovacs_Petho1992:representation_algebraic_integers}}]\label{kv:thm}
Let $\ell(\gamma)$ be the length of the expansion of $\gamma$ to the base
$b$. Then
\[
  \lvert\ell(\gamma)-\max_{1\leq i\leq n}\frac{\log\lvert\gamma^{(i)}\rvert}
  {\log\lvert b^{(i)}\rvert}\rvert\leq C.
\]
\end{lem}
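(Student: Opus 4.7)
The plan is to prove the two-sided bound by splitting it into an upper and a lower inequality. Fix $\gamma\in\mathcal{R}\setminus\{0\}$ with expansion $\gamma=\sum_{h=0}^\ell a_h b^h$, $a_\ell\neq 0$, and $a_h\in\mathcal{N}_0=\{0,1,\ldots,m\}$, so that $|a_h|\leq m$ uniformly. For a canonical number system all conjugates satisfy $|b^{(i)}|>1$; this expansion property drives every estimate below.

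For the upper bound on $\max_i\log|\gamma^{(i)}|/\log|b^{(i)}|$, I would apply the $i$-th conjugation to the expansion and bound term by term via the triangle inequality and geometric series:
\[
|\gamma^{(i)}|\leq m\sum_{h=0}^\ell|b^{(i)}|^h\leq\frac{m\,|b^{(i)}|}{|b^{(i)}|-1}\,|b^{(i)}|^\ell.
\]
Taking logarithms and dividing by $\log|b^{(i)}|$ yields $\log|\gamma^{(i)}|/\log|b^{(i)}|\leq\ell+C_1$ with $C_1$ depending only on $b$ and $m$; the maximum over $i$ produces the half of the inequality corresponding to $\max_i(\cdots)-\ell\leq C$.

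For the lower direction, factor out $b^\ell$: set $\delta^{(i)}:=\gamma^{(i)}/(b^{(i)})^\ell=a_\ell+\sum_{h=0}^{\ell-1}a_h(b^{(i)})^{h-\ell}$. The tail is uniformly bounded by $m/(|b^{(i)}|-1)$, so the vector $(\delta^{(1)},\ldots,\delta^{(n)})$ lies in the bounded set
\[
\mathcal{S}:=\bigcup_{a=1}^{m}\bigl(a+\mathcal{F}\bigr)\subset\R^{r_1}\times\C^{r_2},
\]
where $\mathcal{F}$ denotes the compact closure in the Minkowski space of the fractional-part images $\sum_{h\geq 1}a_h b^{-h}$. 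The crucial claim is $0\notin\overline{\mathcal{S}}$: a sequence of such $\delta$ tending to $0$ would, by pigeonhole on the leading digit, force $-a\in\mathcal{F}$ for some $a\in\{1,\ldots,m\}$, contradicting the uniqueness of the canonical number system representation of $-a$. Compactness then supplies $c>0$ with $\max_i|\delta^{(i)}|\geq c$, hence some $i_0$ with $|\gamma^{(i_0)}|\geq c\,|b^{(i_0)}|^\ell$, and after taking logs I obtain $\max_i\log|\gamma^{(i)}|/\log|b^{(i)}|\geq\ell-C_2$.

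Combining the two inequalities finishes the proof with $C=\max(C_1,C_2)$. The genuinely nontrivial step is the non-intersection claim $0\notin\overline{\mathcal{S}}$; everything else is elementary manipulation of geometric series. This claim amounts to showing that no element of $\{-1,\ldots,-m\}$ admits a purely fractional $b$-adic expansion, which is precisely where the defining finiteness/uniqueness property of a canonical number system is invoked.
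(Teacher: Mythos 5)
Your two-sided split and the easy direction (bounding $\max_i\log|\gamma^{(i)}|/\log|b^{(i)}|$ from above by $\ell+C_1$ via the geometric series) are fine and match the corresponding step in the paper's proof of the generalization, Proposition~\ref{prop:expansion:length:estimation}. For the hard direction, however, your route diverges from the paper's and contains a real gap.

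The crux of your argument is the non-degeneracy claim $0\notin\overline{\mathcal{S}}$, equivalently $-a\notin\mathcal{F}$ for every $a\in\{1,\ldots,m\}$. You dismiss this with the sentence that a fractional expansion of $-a$ ``contradicts the uniqueness of the canonical number system representation of $-a$,'' but that does not follow. Uniqueness in the definition of a number system concerns expansions $\sum_{h\ge 0}c_h b^{h}$ in \emph{non-negative} powers; the hypothetical membership $-a\in\mathcal{F}$ would produce an expansion $-a=\sum_{h\ge 1}a_h b^{-h}$ in \emph{negative} powers, and there is no axiom asserting that the two kinds of expansion cannot coexist. Indeed, bi-infinite representations are not unique in general (the decimal identity $1=0.999\ldots$ is the prototype), and lattice points can and do lie on the boundary $\partial\mathcal{F}$ of the tile; $0$ itself is such a point. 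So the claim $-a\notin\mathcal{F}$ is a genuine theorem needing proof (it is essentially what makes the Kov\'acs--Peth\H{o} result nontrivial), and a correct argument must exploit more than the bare uniqueness axiom --- for instance, by iterating multiplication by $b$ to produce a sequence of elements of $\mathcal{R}$ with uniformly bounded conjugates, invoking finiteness, and then deriving a periodicity contradiction, or by analyzing the neighbour structure of the tiling.

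By contrast, the paper's own proof of Proposition~\ref{prop:expansion:length:estimation} (which subsumes Lemma~\ref{kv:thm}) handles this direction constructively via the backward-division operator $V$: after choosing $u$ of the right size one shows that $V^{u+1}(g)$ has bounded coordinates, hence lies in a finite set, hence has representation length bounded by an absolute constant $c_1$, yielding $\ell(g)\le u+c_1$. This sidesteps the delicate topological question about $\partial\mathcal{F}$ entirely. Your compactness framework could in principle be made to work, but as written the single sentence carrying all the weight is unjustified; you should either prove $-a\notin\mathcal{F}$ carefully or switch to the $V$-operator argument.

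Two minor remarks: the conjugation step implicitly assumes $p$ irreducible so that $\mathcal{R}$ sits in a number field, which is the correct setting for Lemma~\ref{kv:thm} but should be stated; and the fact $|b^{(i)}|>1$ for all $i$ is itself a theorem (cited in the paper from Peth\H{o}'s Theorem~6.1) rather than part of the definition of a canonical number system.
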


Therefore we define $\mathcal{R}(T_1,\ldots,T_r)$ to be the set
\begin{gather}\label{mani:N}
  \mathcal{R}(T_1,\ldots,T_r):=\left\{\lambda\in\mathcal{R}:\lvert\lambda^{(i)}\rvert\leq
    T_i,1\leq i\leq r\right\}.
\end{gather}
Now we use Lemma \ref{kv:thm} to bound the area $\mathcal{R}(T_1,\ldots,T_n)$ such
that we reach all elements of a certain length. Thus for a fixed $T$ we set
$T_i$ for $1\leq i\leq n$ such that
\begin{gather}\label{mani:Ti}
  \log T_i=\log T\frac{\log\lvert b^{(i)}\rvert^n}
  {\log\lvert\mathrm{N}(b)\rvert}.
\end{gather}
Furthermore we will write for short $\mathcal{R}(\mathbf{T}):=\mathcal{R}(T_1,\ldots,T_r)$
with $T_i$ as in \eqref{mani:Ti}.

Finally one can extend the definition of a number system also for
negative powers of $b$. Then for $\gamma\in\overline{\mathcal{K}}$ such that
\[
  \gamma=\sum_{h=-\infty}^\ell a_hb^h\quad\text{with}\quad a_h\in\mathcal{N}
\]
we call
\[
  \lf\gamma\rf:=\sum_{h=0}^ha_hb^h
  \quad\text{and}\quad
  \{\gamma\}:=\sum_{h\geq1}a_hb^{-h}
\]
the \textit{integer part} and \textit{fractional part} of $\gamma$,
respectively.

With all these tools we now can state the generalization of the theorem of
Bassily and K\'atai to arbitrary number fields.

\begin{thm}[{\cite{madritsch2010:asymptotic_normality_b}}] \label{Madritsch}
Let $(b,\mathcal{N})$ be a number system in $\mathcal{R}$ and $f$ be a
$b$-additive function such that $f(ab^h)=\mathcal{O}(1)$ as
$h\to\infty$ and $a\in\mathcal{N}$. Furthermore
let
\[
  m_{h,b}:=\frac1{\mathrm{N}(b)}\sum_{a\in\mathcal{N}}f(ab^h),\quad
  \sigma^2_{h,b}:=\frac1{\mathrm{N}(b)}\sum_{a\in\mathcal{N}}f^2(ab^h)-m^2_{h,b},
\]
and
\[
  M_b(x):=\sum_{h=0}^Lm_{h,q},\quad
  D^2_b(x)=\sum_{h=0}^L\sigma^2_{h,q}
\]
with $L=[\log_{\mathrm{N}(b)}x]$.

Assume that there exists an $\varepsilon>0$ such that $D_b(x)/(\log
x)^{\varepsilon}\to\infty$ as $x\to\infty$ and let
$P\in\overline{\mathcal{K}}[X]$ be a polynomial of degree $d$. Then, as
$T\to\infty$ let $T_i$ be as in \eqref{mani:Ti},
\[
\frac1{\#\mathcal{R}(\mathbf{T})}
\#\left\{z\in \mathcal{R}(\mathbf{T})\middle\vert\frac{f(\lf
    P(z)\rf)-M_b(T^d)}{D_b(T^d)}<y \right\}
\to\frac1{\sqrt{2\pi}}\int_{-\infty}^y\exp(-x^2)\mathrm{d}x.
\]
\end{thm}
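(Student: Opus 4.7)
The natural strategy is to prove asymptotic normality via the \emph{method of moments}: establish that for every fixed positive integer $k$ the $k$-th moment of the normalized random variable $\bigl(f(\lfloor P(z)\rfloor)-M_b(T^d)\bigr)/D_b(T^d)$, averaged over $z\in\mathcal{R}(\mathbf{T})$, converges to the $k$-th moment of a standard Gaussian. To make the individual digit contributions nearly independent, I would first truncate $f$ by decomposing
\[
  f(\lfloor P(z)\rfloor)=\sum_{h<\alpha L}f\bigl(a_h(P(z))b^h\bigr)+\sum_{\alpha L\le h\le(1-\alpha)L}f\bigl(a_h(P(z))b^h\bigr)+\sum_{h>(1-\alpha)L}f\bigl(a_h(P(z))b^h\bigr),
\]
with $L=\lfloor\log_{\Norm(b)}T^d\rfloor$ and a small parameter $\alpha>0$, and show that the outer blocks contribute only $o(D_b(T^d))$ in mean square (using $f(ab^h)=\mathcal O(1)$ and the hypothesis $D_b(x)/(\log x)^\varepsilon\to\infty$). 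Then only the middle block $\tilde f$ has to be analyzed, and its variance is essentially $D_b(T^d)^2$.

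The core estimate is then to count, for a fixed range of middle indices $h$ and each $a\in\mathcal N$, how often $a_h(P(z))=a$ as $z$ varies in $\mathcal R(\mathbf{T})$. To do this I would detect the digit through additive characters on $\Z[\beta]$: writing $a_h(P(z))$ as a functional of $\{b^{-h-1}P(z)\}$ modulo the integer part, one can express the indicator $\mathbf 1[a_h(P(z))=a]$ as a Fourier series in the fractional parts of conjugates of $b^{-h-1}P(z)$. This reduces the problem to bounding exponential sums of the shape
\[
  S(\boldsymbol\eta)=\sum_{z\in\mathcal R(\mathbf{T})}\exp\!\Bigl(2\pi i\sum_{i=1}^n\eta_i\bigl(b^{-h-1}P(z)\bigr)^{(i)}\Bigr).
\]
Here the main obstacle sits: I need a Weyl-type estimate for polynomial exponential sums evaluated at lattice points in the box $\mathcal R(\mathbf{T})$ of the order $\mathcal R=\Z[\beta]$. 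The right tool is a multidimensional Weyl/van der Corput inequality applied coordinate-wise after expanding $z$ in an integral basis of $\mathcal R$; the calibration \eqref{mani:Ti} is exactly what ensures that the resulting boxes in $\R^n$ are large enough in every coordinate for Weyl differencing on the leading coefficient of $P$ to produce a nontrivial saving uniformly in $h$ lying in the middle range.

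With this Weyl bound at hand, one obtains that the joint distribution of the middle digits $(a_h(P(z)))_{\alpha L\le h\le(1-\alpha)L}$ is, up to an error which is negligible after averaging, the product measure giving each digit $a\in\mathcal N$ weight $1/\Norm(b)$. Consequently the $k$-th moment of $\tilde f$ matches that of a sum of independent random variables with means $m_{h,b}$ and variances $\sigma_{h,b}^2$, for which the classical Lyapunov/Lindeberg CLT applies; all odd central moments vanish up to lower order, and the even ones converge to $(2j)!/(2^j j!)$ after normalization by $D_b(T^d)$. Combining the outer-block negligibility, the character-sum analysis of the middle block, and the moment convergence for sums of independent digits yields the stated convergence to the normal distribution. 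The main technical hurdle is really the uniformity of the Weyl estimate in $h$ and in the characters $\boldsymbol\eta$: one must control both short sums (when $h$ is close to $(1-\alpha)L$, so $b^{-h-1}P(z)$ is close to an integer) and long sums, which is why the restriction to a middle window of digits is essential.
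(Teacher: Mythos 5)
Your overall architecture matches the paper's: truncate $f$ to a middle window of digits, use the method of moments (the paper invokes Fr\'echet--Shohat), reduce the digit statistics to exponential sums, and compare with a sum of independent digit random variables via the CLT. That part is sound and is essentially what happens in Section~\ref{sec:proof-thm}.

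The genuine gap is in your sentence ``one can express the indicator $\mathbf 1[a_h(P(z))=a]$ as a Fourier series in the fractional parts of conjugates of $b^{-h-1}P(z)$.'' In the rational $q$-adic case the condition $a_h(m)=a$ is $\{m/q^{h+1}\}\in[a/q,(a+1)/q)$, an interval, so the Fourier coefficients of the indicator decay like $1/|k|$ and the Erd\H{o}s--Tur\'an machinery applies directly. In a number system in an order of a number field, the region of $\R^n$ (after the embedding $\phi$) where the leading digit equals $a$ is the translate $\mathcal F_a=B^{-1}(\mathcal F+\phi(a))$ of the \emph{fractal} fundamental domain $\mathcal F$, whose boundary has box dimension strictly between $n-1$ and $n$. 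The raw indicator of such a set does not admit a usable Fourier expansion: the coefficients do not decay uniformly, and the boundary contribution must be controlled separately. The paper spends Lemmas~\ref{gt:lem3.1}--\ref{gt:lem3.2} constructing an Urysohn function $u_a$ by convolving $\mathbf 1_{\mathcal F_a}$ with a small box of side $\kappa\asymp|\det B|^{-v}$, covering $\partial\mathcal F_a$ by an axe-parallel tube $P_{v,a}$ of $\mathcal O(\mu^v)$ rectangles with $\mu<|\det B|$, and only then obtaining coefficient bounds $c_{\mathbf m}\ll\mu^v\prod_i r(m_i)^{-1}$. The ``border hits'' $F_l$ are estimated separately in Proposition~\ref{mani:prop:border} via the Erd\H{o}s--Tur\'an--Koksma inequality. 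Without this smoothing-and-tube device, your plan stalls at the step where you pass from the indicator to a Fourier series you can sum.

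A secondary point: your treatment of the exponential sum by ``multidimensional Weyl/van der Corput differencing coordinate-wise'' is not wrong in spirit, but it omits the crucial diophantine-approximation input. After Weyl differencing one must know that the leading coefficient of $P$, twisted by $b^{-h-1}$, is \emph{not} too close to a ratio of small algebraic integers; the paper supplies this with Siegel's approximation lemma (Lemma~\ref{siegel:lem}) and the case analysis on the size of $\house{q}$ in the proof of Proposition~\ref{mani:prop:weyl}. Your phrase ``the calibration \eqref{mani:Ti} is exactly what ensures \dots Weyl differencing \dots produce a nontrivial saving'' is where this is swept under the rug; the saving actually comes from the alternative: either the approximation denominator $q$ is large (Weyl's inequality applies) or it is small, forcing $l_1$ or $l_h$ outside the window \eqref{mani:weyl:l_bounds}, contradiction. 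You should make this dichotomy explicit rather than attributing the saving to the box calibration alone.
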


\section{Definitions and result}\label{sec:definitions-result}
The objective of this paper are generalizations of number systems to
quotient rings of the ring of polynomials over the integers. Our aim is to
extend Theorem \ref{Madritsch} to such rings. To formulate our results we have
to introduce the relevant notions. In particular we use the following
definition in order to describe number systems in quotient rings of the ring of
polynomials over the integers.

\begin{defi}
Let $p\in\Z[X]$ be monic of degree $n$ and let $\mathcal N$ be a subset of
$\Z$. The pair $(p,{\mathcal N}) $ is called a number system if for every
$g\in \Z[X]\setminus \{0\}$ there exist unique $\ell\in\N$ and $a_i\in
{\mathcal N}, h=0,\dots,\ell; a_{\ell}\not= 0$ such that
\begin{gather}\label{mani:Xrepresentation}
g \equiv \sum_{h=0}^{\ell} a_h(g) X^h \pmod{p}.
\end{gather}
In this case $a_i$ are called the digits and $\ell=\ell(a)$ the length of the
representation.
\end{defi}

This concept was introduced in
\cite{pethoe1991:polynomial_transformation_and} and was studied among
others in
\cite{akiyama_borbely_brunotte+2005:generalized_radix_representations,
akiyama_rao2004:new_criteria_canonical,
kovacs_petho1991:number_systems_in,
Kovacs_Petho1992:representation_algebraic_integers}.
It was proved in \cite{akiyama_rao2004:new_criteria_canonical}, that $\mathcal
N$ must be a complete residue system modulo $p(0)$ including $0$ and the zeroes
of $p$ are lying outside or on the unit circle. However, following the
argument of the proof of Theorem 6.1 of
\cite{pethoe1991:polynomial_transformation_and}, which dealt with the case
$p$ square free, one can prove that non of the zeroes of $p$ are lying on
the unit circle.

If $p$ is irreducible then we may replace $X$ by one of the roots
$\beta$ of $p$. Then we are in the case of $\Z[X]/(p)\cong\Z[\beta]$
being an integral domain in an algebraic number field (\textit{cf.} Section
\ref{sec:introduction}). Then we may also denote the number system by the
pair$(\beta,\mathcal{N})$ instead of $(p,\mathcal{N})$. For example, let
$q\geq2$ be a positive integer, then $(p,\mathcal{N})$ with $p=X-q$ gives a
number system in $\Z$, which corresponds to the number systems
$(q,\mathcal{N})$. Furthermore for $n$ a positive integer and
$p=X^2+2n\,X+(n^2+1)$ we get number systems in $\Z[i]$.

Now we want to come back to these more general number systems and consider
additive functions within them.
\begin{defi}
Let $(p,\mathcal{N})$ be a number system. A function $f$ is called
\textit{additive} if $f(0)=0$ and
\begin{gather*}
  f(g)\equiv\sum_{h=0}^\ell f(a_h(g)X^h)\pmod p\quad\text{for}\quad
  g\equiv\sum_{h=0}^\ell a_h(g)X^h\pmod p.
\end{gather*}
\end{defi}

Since we have defined the analogues of number systems and additive functions to
the definitions for number fields above, we now need to extend the length
estimation of Lemma \ref{kv:thm} in order to successfully state the result. But
before we start we need a little linear algebra. We fix a number system
$(p,\mathcal{N})$ and factor $p$ by
\[
p:=\prod_{i=1}^tp_i^{m_i}
\]
with $p_i\in\Z[X]$ irreducible and $\deg p_i=n_i$. Furthermore we
denote by $\beta_{ik}$ the roots of $p_i$ for $i=1,\ldots,t$ and
$k=1,\ldots,n_i$.

Then we define by
\[
\mathcal{R}:=\Z[X]/(p)=\bigoplus_{i=1}^t\mathcal{R}_i\quad\text{with}\quad
\mathcal{R}_i=\Z[X]/\left(p_i^{m_i}\right)
\]
for $i=1,\ldots,t$ the $\Z$-module under consideration and in the same manner by
\[
\mathcal{K}:=\Q[X]/(p)=\bigoplus_{i=1}^t\mathcal{K}_i\quad\text{with}\quad
\mathcal{K}_i=\Q[X]/\left(p_i^{m_i}\right)
\]
for $i=1,\ldots,t$ the corresponding vector space. Finally we denote by $\overline{\mathcal{K}}$ the
completion of $\mathcal{K}$ according to the usual Euclidean distance.

Obviously $\mathcal{R}$ is a free $\Z$-module of rank $n$. Let
$\lambda:\mathcal{R}\to\mathcal{R}$ be a linear mapping and
$\{z_1,\ldots,z_n\}$ be any basis of $\mathcal{R}$. Then
\[
\lambda(z_j)=\sum_{i=1}^na_{ij}z_i\quad(j=1,\ldots,n)
\]
with $a_{ij}\in\Z$. The matrix $M(\lambda)=\left(a_{ij}\right)$ is
called the matrix of $\lambda$ with respect to the basis
$\{z_1,\ldots,z_n\}$. For an element $r\in\mathcal{R}$ we define by
$\lambda_r:\mathcal{R}\to\mathcal{R}$ the mapping of multiplication by
$r$; that is $\lambda_r(z)=rz$ for every $z\in\mathcal{R}$. Then we
define the norm $\mathrm{N}(r)$ and the trace $\mathrm{Tr}(r)$ of an
element $r\in\mathcal{R}$ as the determinant and the trace of
$M(\lambda_r)$, respectively, \textit{i.e.},
\[
\mathrm{N}(r):=\det(M(\lambda_r)),\quad
\mathrm{Tr}(r):=\mathrm{Tr}(M(\lambda_r)).
\]
Note that these are unique despite of the used basis $\{z_1,\ldots,z_n\}$.
We can canonically extend these notions to $\mathcal{K}$ and
$\overline{\mathcal{K}}$ by everywhere replacing $\Z$ by $\Q$ and $\R$,
respectively.

In the following we will need parameters which help us bounding the length of
the expansion of an element $g\in\mathcal{R}$. Therefore let $g\in\Z[X]$ be a
polynomial, then we put
\[
B_{ijk}(g):=\left.\frac{\mathrm{d}^{j-1}
    g}{\mathrm{d}X^{j-1}}\right\vert_{X=\beta_{ik}}\quad
(i=1,\ldots,t;j=1,\ldots,m_i;k=1,\ldots,n_i).
\]
In
connection with these values we define the ``house'' function H as
\begin{gather*}
  H(g):=\max_{i=1}^t\max_{j=1}^{m_i}\max_{k=1}^{n_i}\lvert B_{ijk}(g)\rvert.
\end{gather*}

We want to investigate the elements with bounded maximum length of
expansion. To this end we need a proposition which estimates the
length of expansion in connection with properties of the number
itself. The proof of this proposition will be presented in the following
section.

\begin{prop}\label{prop:expansion:length:estimation}
  Assume that $(p,\mathcal{N})$ is a number system.  Let $N =
  \max\{\lvert a\rvert:a\in\mathcal{N}\}$ and we set
  \[
  M(g):=\max\left\{\frac{\log\lvert
      B_{ijk}(g)\rvert}{\log\lvert\beta_{ik}\rvert}:i=1,\dots,t;j=1,\ldots,m_i;k=1,\ldots,n_i\right\}.
  \]
  If $g\in \Z[X]$ is of degree at most $n-1$, then for any
  $\varepsilon>0$ there exists $L = L(\varepsilon)$ such that if
  $\ell(g) > L$ then
  \begin{gather}
    \lvert\ell(g)-M(g)\rvert\leq C.
  \end{gather}
\end{prop}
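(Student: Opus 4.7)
The plan is to prove matching upper and lower bounds, $M(g)\leq\ell(g)+C$ and $\ell(g)\leq M(g)+C$, extending the strategy of Lemma \ref{kv:thm} from the algebraic number field case to the present setting in which $p$ may be reducible and its irreducible factors may occur with multiplicities $m_i\geq 1$. The essential new feature is that the multiplication-by-$X$ operator on $\mathcal{R}$ is no longer diagonalisable over $\overline{\mathcal{K}}$; in the Taylor-at-roots basis it becomes block-diagonal with Jordan blocks of size $m_i$ and eigenvalue $\beta_{ik}$, and this is the source of the derivative values $B_{ijk}$ appearing in the statement.

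For the upper bound I would write $g(X)=\sum_{h=0}^\ell a_h X^h-p(X)Q(X)$ with $\ell=\ell(g)$, $|a_h|\leq N$ and $Q\in\Z[X]$, then differentiate $j-1$ times and evaluate at $X=\beta_{ik}$. Since $p$ vanishes to order $m_i\geq j$ at $\beta_{ik}$, the $pQ$ contribution drops out and one obtains the explicit formula
\[
  B_{ijk}(g)=\sum_{h=j-1}^{\ell} a_h\,\frac{h!}{(h-j+1)!}\,\beta_{ik}^{h-j+1}.
\]
Using the fact recorded just before the proposition that $|\beta_{ik}|>1$ for every root (the refinement of Akiyama--Rao noted after the definition of a number system), the geometric sum is controlled by its top term, which yields $|B_{ijk}(g)|\leq C_1\ell^{j-1}|\beta_{ik}|^{\ell-j+1}$. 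Dividing by $\log|\beta_{ik}|$ and taking the maximum gives $M(g)\leq\ell(g)+C$ as soon as $\ell(g)>L(\varepsilon)$, the role of $L$ being to absorb the polynomial factor $\ell^{j-1}$ into a fixed additive constant.

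For the lower bound I would iterate the digit-extraction map: starting from $g_0:=g$ and setting $g_{s+1}:=(g_s-a_s(g))/X$ in $\mathcal{R}$, one has $\ell(g_{s+1})=\ell(g_s)-1$ and the iteration terminates at the single-digit state $g_\ell=a_\ell(g)\neq 0$. The behaviour of the derivative values under multiplication by $X$ is governed by the Jordan relation
\[
  B_{ijk}(Xu)=\beta_{ik}\,B_{ijk}(u)+(j-1)B_{i,j-1,k}(u),
\]
which realises multiplication by $X$ as a block-triangular operator on $\bigoplus_{i,j,k}\C$ whose $(i,k)$-block is a Jordan block of size $m_i$ with eigenvalue $\beta_{ik}$. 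Because $|\beta_{ik}|>1$ for every $(i,k)$, Gelfand's spectral radius formula provides an integer $K$ and a factor $c<1$ such that $K$-fold iteration of $u\mapsto(u-\mathrm{digit})/X$ is a strict contraction of the house norm $H$ up to an additive error controlled by $N$. Unwinding the iteration from $g_\ell$ back to $g_0=g$ then yields a lower bound of the form $H(g)\geq|\beta_{\min}|^{\ell(g)}/(C_2\,\ell(g)^{n-1})$, and hence $M(g)\geq\ell(g)-C$ after absorbing the polynomial correction once $\ell(g)>L(\varepsilon)$.

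The main obstacle will be the bookkeeping forced by the Jordan structure: in the original Kov\'acs--Peth\H{o} situation every $m_i=1$, the cross-term $(j-1)B_{i,j-1,k}$ is absent, and multiplication by $X$ acts as a scalar in each component. Here one has to perform the estimates by a triangular induction on $j$, or equivalently replace $H$ by an equivalent rescaled norm under which $X^K$ is a genuine contraction, and then verify that the polynomial-in-$\ell$ corrections introduced by the Jordan blocks are subexponential and therefore absorbed once $\ell(g)$ exceeds the threshold $L(\varepsilon)$.
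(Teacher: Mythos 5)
Your upper-bound half is essentially the paper's argument verbatim: differentiate the identity $g=\sum_h a_hX^h+rp$, evaluate at $\beta_{ik}$ so that the $rp$-term vanishes to order $m_i\geq j$, and bound the resulting sum by its top term to obtain $|B_{ijk}(g)|\leq N\,\ell^{j-1}|\beta_{ik}|^{\ell}/(|\beta_{ik}|-1)$, whence $M(g)\leq\ell(g)+o(\ell(g))$. Your Jordan relation $B_{ijk}(Xu)=\beta_{ik}B_{ijk}(u)+(j-1)B_{i,j-1,k}(u)$ is also correct and is a clean way to describe the block-triangular structure that the paper handles implicitly.

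The gap is in the other direction, in the passage from your claimed estimate
$H(g)\geq|\beta_{\min}|^{\ell(g)}/(C_2\,\ell(g)^{n-1})$
to the conclusion $M(g)\geq\ell(g)-C$. The quantity $H(g)$ is $\max_{i,j,k}|B_{ijk}(g)|$, while $M(g)$ is $\max_{i,j,k}\log|B_{ijk}(g)|/\log|\beta_{ik}|$: the two maxima are weighted differently and need not be attained at the same index. From a lower bound on $H$ alone you can only conclude that \emph{some} $|B_{i_0j_0k_0}(g)|$ is at least $|\beta_{\min}|^{\ell}/\text{poly}$, and if $|\beta_{i_0k_0}|>|\beta_{\min}|$ this gives $\log|B_{i_0j_0k_0}(g)|/\log|\beta_{i_0k_0}|\geq\alpha\,\ell-O(\log\ell)$ with $\alpha=\log|\beta_{\min}|/\log|\beta_{i_0k_0}|<1$, which is a multiplicative loss, not the additive bound you need. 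Moreover a global contraction constant from Gelfand's formula is governed by the \emph{smallest} $|\beta_{ik}|$ and erases exactly the blockwise rates you need; and a naive backward unwinding of $|B_{i1k}(g_s)|\geq|\beta_{ik}|\,|B_{i1k}(g_{s+1})|-N$ collapses to a vacuous bound whenever $N\geq|\beta_{ik}|-1$, which certainly occurs.

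The fix, which is what the paper actually does, is to run the iteration forward and keep track of each eigenvalue block with its own rate. Choose $u$ to be the largest integer with $|B_{ijk}(g)|\geq u^{j}|\beta_{ik}|^{u-j+1}/(|\beta_{ik}|-1)$ for \emph{all} $(i,j,k)$ simultaneously; then $u\leq(1+\varepsilon/2)M(g)$. Writing $g\equiv\sum_{h=0}^{u}a_hX^h+X^{u+1}V^{u+1}(g)\pmod p$, differentiating, evaluating at the $\beta_{ik}$, and doing the triangular induction on $j$ (using your Jordan relation) shows that the derivative values of $V^{u+1}(g)$ at the roots are bounded by a constant. Since the resulting linear system in the $n$ integer coefficients of $V^{u+1}(g)$ has nonvanishing determinant, $V^{u+1}(g)$ lies in a finite set and thus has bounded representation length, giving $\ell(g)\leq u+c_1\leq(1+\varepsilon)M(g)$. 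The crucial ingredient you are missing is this choice of $u$ tied to the individual $|\beta_{ik}|$, together with the finiteness/nondegeneracy argument that converts "bounded derivative values" into "bounded remaining length"; neither is supplied by a single scalar bound on the house norm $H$.
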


This provides us with an estimation for the length of the expansion and
motivates us to look at subsets of $\mathcal{R}$ where the absolute values
$B_{ijk}$ are bounded. For a vector
$\mathbf{T}:=(T_1,\ldots,T_n)=(T_{111},\ldots,T_{11n_1},T_{121},\ldots,
T_{1,m_i,n_1},T_{211},\ldots,T_{t,m_t,n_t})$ we denote by
\begin{gather}
  \mathcal{R}(\mathbf{T}):=\left\{g\in\mathcal{R}:\lvert
    B_{ijk}(g)\rvert\leq T_{ijk}\right\}\\
  \mathcal{R}_i(\mathbf{T}):=\left\{g\in\mathcal{R}_i:\lvert
    B_{ijk}(g)\rvert\leq T_{ijk}\right\}.
\end{gather}
We want to let the length of expansion to smoothly increase. Therefore
we fix a $T$ and set $T_{ijk}$ for $i=1,\ldots,t$, $j=1,\ldots,m_i$,
$k=1,\ldots,n_i$ such that
\begin{gather}\label{mani:T_ikj}
  \log T_{ijk}=\log T\frac{\log\lvert\beta_{ik}\rvert^n}
  {\log\prod_{i=1}^t\prod_{k=1}^{n_i}\lvert\beta_{ik}\rvert^{m_i}}.
\end{gather}
Remark that $T_{ijk}$ is independent from $j$, which will be important
in Lemma \ref{lem:num:elements}. In view of Proposition
\ref{prop:expansion:length:estimation} we get that the expansions of the
elements in $\mathcal{R}(\mathbf{T})$ almost have the same maximum length. If
not stated otherwise we denote by $\mathbf{T}$ the vector
$(T_{111},\ldots,T_{t,m_t,n_t})$ where the $T_{ijk}$ are as in
(\ref{mani:T_ikj}).

Since $X$ is an invertible element in $\mathcal{K}$ we may extend the
definition of a number system for negative
powers of $X$. Then for $\gamma\in\overline{\mathcal{K}}$ such that
\[
\gamma=\sum_{h=-\infty}^\ell a_hX^h\quad\text{with}\quad
a_h\in\mathcal{N}
\]
we call
\[
\lfloor\gamma\rfloor:=\sum_{h=0}^\ell a_hX^h\quad\text{and}\quad
\{\gamma\}:=\sum_{h=-\infty}^{-1} a_hX^h
\]
the \textit{integer part} and \textit{fractional part} of $\gamma$,
respectively.

Now we have collected all the tools to state our main result.
\begin{thm}\label{thm:result}
  Let $(p,\mathcal{N})$ be a number system and $f$ be an additive
  function such that $f(aX^h)=\mathcal{O}(1)$ as $h\to\infty$ and
  $a\in\mathcal{N}$. Furthermore let
  \begin{align*}
    m_h:=\frac1{\lvert\mathcal{N}\rvert}\sum_{a\in\mathcal{N}}f(aX^h),\quad
    \sigma_h^2:=\frac1{\lvert\mathcal{N}\rvert}\sum_{a\in\mathcal{N}}f^2(aX^h)-m_h^2,
  \end{align*}
  and
  \begin{align*}  
    M(x):=\sum_{h=0}^Lm_h,\quad
    D^2(x)&:=\sum_{h=0}^L\sigma_h^2,
  \end{align*}
  where $L=\lfloor\log_{p(0)}x\rfloor$. Assume that there exists an $\varepsilon>0$ such that
  $D(x)/(\log x)^\varepsilon\to\infty$ as $x\to\infty$ and let
  $P\in\overline{\mathcal{K}}[Y]$ be a polynomial of degree $d$. Then, as
  $T\to\infty$ let $T_{ijk}$ be as in (\ref{mani:T_ikj}),
  \[
  \frac1{\#\mathcal{R}(T)}\#\left\{ z\in\mathcal{R}(T):\frac{f(\lfloor
      P(z)\rfloor)-M(T^d)}{D(T^d)}<y
  \right\}\to\frac1{\sqrt{2\pi}}\int_{-\infty}^y\exp(-x^2)\mathrm{d}x.
  \]
\end{thm}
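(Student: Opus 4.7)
The plan is to follow the method-of-moments approach of Bassily--K\'atai, generalized by Madritsch: I fix a positive integer $k$, compute
\[
\mathcal{M}_k(T):=\frac1{\#\mathcal{R}(\mathbf{T})}\sum_{z\in\mathcal{R}(\mathbf{T})}\left(\frac{f(\lfloor P(z)\rfloor)-M(T^d)}{D(T^d)}\right)^{\!k},
\]
and show $\mathcal{M}_k(T)\to\mu_k$, the $k$-th moment of the standard normal; the Fr\'echet--Shohat theorem then delivers the asymptotic normality. Proposition~\ref{prop:expansion:length:estimation} is the standing tool: it guarantees $\ell(\lfloor P(z)\rfloor)=dL+\mathcal{O}(1)$ uniformly in $z\in\mathcal{R}(\mathbf{T})$, so the digit sum only sees positions $0\le h\lesssim dL$.

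First I would truncate the digit range. Set cutoffs $H_1:=\lfloor(\log T)^{\varepsilon/2}\rfloor$ and $H_2:=dL-H_1$, and replace $f(\lfloor P(z)\rfloor)$ by the middle part
\[
\tilde f(\lfloor P(z)\rfloor):=\sum_{h=H_1}^{H_2}f\bigl(a_h(\lfloor P(z)\rfloor)X^h\bigr).
\]
Since $f(aX^h)=\mathcal{O}(1)$, the difference $f-\tilde f$ is $\mathcal{O}(H_1)$ pointwise, and similarly the truncated mean differs from $M(T^d)$ by $\mathcal{O}(H_1)$. The hypothesis $D(x)/(\log x)^{\varepsilon}\to\infty$ makes this error $o(D(T^d))$, and a standard binomial expansion shows it contributes $o(1)$ to every fixed $\mathcal{M}_k(T)$.

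The heart of the proof is a near-independence statement for the digits of $\lfloor P(z)\rfloor$ at levels $h\in[H_1,H_2]$. For each such level the indicator $\mathbf{1}_{[a_h(\lfloor P(z)\rfloor)=a]}$ expands as a finite Fourier sum over the additive characters of $\mathcal{R}/X^{h+1}\mathcal{R}$. Substituting these expansions into the expanded $k$-th power reduces $\mathcal{M}_k(T)$ to a bounded number of exponential sums of the shape
\[
S(\alpha)=\sum_{z\in\mathcal{R}(\mathbf{T})}e\!\left(\Tr\bigl(\alpha\,P(z)\bigr)\right),\qquad\alpha\in\overline{\mathcal{K}},
\]
supported at the chosen levels. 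The key analytic input is a Weyl-type bound $|S(\alpha)|\ll\#\mathcal{R}(\mathbf{T})\cdot T^{-\delta}$ for $\alpha$ that avoid rational directions of small denominator. I would obtain it blockwise on the decomposition $\mathcal{R}=\bigoplus_i\mathcal{R}_i$: inside each $\mathcal{R}_i=\Z[X]/(p_i^{m_i})$ I pick a basis adapted to the Taylor expansion around each root $\beta_{ik}$, and then apply Weyl differencing $d$ times. The calibration \eqref{mani:T_ikj} is exactly what is needed so that every one-dimensional projection we difference against still has length growing with $T$.

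The remainder is the usual combinatorial matching: expanding $\mathcal{M}_k(T)$ over tuples $(h_1,\dots,h_k)$, non-pairing partitions are killed by the Weyl bound, while pair-matching partitions reproduce $\mu_k$ via $\sum_h\sigma_h^2=D^2(T^d)$. The principal obstacle will be the Weyl bound when some $m_i\ge2$: in those blocks the embedding $\mathcal{R}_i\hookrightarrow\C^{m_in_i}$ is given not only by evaluation at the roots but also by derivatives---precisely why the quantities $B_{ijk}$ enter Proposition~\ref{prop:expansion:length:estimation}. The successive differencing must therefore be carried out in the combined evaluation-and-derivative coordinates, and one has to check that the normalization \eqref{mani:T_ikj}, being independent of $j$, still produces non-trivial cancellation on every such coordinate. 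Once this estimate is in hand, the proof concludes by a routine, if notationally heavy, calculation.
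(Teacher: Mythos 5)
Your high-level strategy --- method of moments via Fr\'echet--Shohat, truncation of the digit range, a near-independence statement for the interior digits of $\lfloor P(z)\rfloor$, and a Weyl-type bound computed blockwise on $\mathcal{R}=\bigoplus_i\mathcal{R}_i$ --- matches the paper's. But there are three concrete places where the sketch, as written, would not go through.

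First, the ``finite Fourier sum over the additive characters of $\mathcal{R}/X^{h+1}\mathcal{R}$'' does not reduce $\mathcal{M}_k(T)$ to a \emph{bounded} number of exponential sums. That group has order $|p(0)|^{h+1}$, and the sharp digit indicator carries on the order of $|p(0)|^{h+1}$ characters whose coefficients do not decay; for $h$ near $dL$ this is of size comparable to $T^{nd}$. The paper's remedy is a smoothing step: the Urysohn function $u_a$ of \eqref{mani:urysohn}, built from the fundamental domain $\mathcal{F}_a$ embedded in $\R^n$, whose Fourier coefficients enjoy the decay of Lemma~\ref{gt:lem3.2}; the series is truncated and the resulting border error $F_l$ is controlled separately via Erd\H os--Tur\'an--Koksma (Proposition~\ref{mani:prop:border}). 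Your plan needs an analogous smoothing; without it the combinatorics do not close.

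Second, the Weyl bound you postulate, $|S(\alpha)|\ll\#\mathcal{R}(\mathbf{T})\,T^{-\delta}$, is stronger than what is available and stronger than what is needed. The relevant frequencies are of the form $\sum_r\langle\mathbf{h}_r,B^{-l_r-1}\phi(\cdot)\rangle$, hence are well approximable by ``rationals'' whose denominators are powers of the base, and the Siegel approximation/case analysis (Lemma~\ref{siegel:lem}) together with the position constraints \eqref{mani:weyl:l_bounds} yields only a logarithmic saving, as stated in Proposition~\ref{mani:prop:weyl}: $\ll T^n(\log T)^{-t\sigma_0}$. That is enough because the truncation parameter, the number of admissible position tuples for fixed $k$, and the Fourier truncation height are themselves all polylogarithmic, but it is a genuinely weaker estimate than you assume.

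Third, your treatment of $m_i\geq 2$ --- Weyl differencing ``in the combined evaluation-and-derivative coordinates'' --- is not what the paper does, and it is unclear it can be made to work. The paper splits $\mathcal{R}_i=\tilde{\mathcal{R}_i}\oplus\mathcal{N}_i$ into reduced and nilpotent parts, estimates the nilpotent factor trivially by its cardinality (Lemma~\ref{lem:num:elements}), and extracts the entire saving from the reduced factor $\tilde{\mathcal{R}_i}\cong\Z[X]/(p_i)$, which is an order in a number field where the $m_i=1$ argument applies verbatim. Trying to difference through the derivative coordinates is unnecessary and appears to be a dead end: the dependence of $\phi_i(P_i(z_{i1}+z_{i2})X^{-l_r-1})$ on the nilpotent variable $z_{i2}$ is degenerate and does not set up cleanly for van~der~Corput/Weyl differencing. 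Redirect this part of the plan to the radical--nilpotent splitting.

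Finally, a small remark on the endgame: you propose to evaluate $\mathcal{M}_k(T)$ directly via a pairing/non-pairing partition count, whereas the paper instead compares $\xi_k(T)$ with a reference moment $\eta_k(T)$ over $\mathcal{R}(\mathbf{T}^d)$ and invokes the CLT for the latter, using Proposition~\ref{mani:centralprop} to show $\xi_k-\eta_k\to0$. Both routes are fine once the uniformity proposition is in place; this is a presentational difference rather than a gap.
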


Our theorem shows that the distribution properties of patterns in the sequence
of digits depend neither on the polynomial $p$ nor on the quotient ring
$\mathcal R$. They are intimate properties of the "backward division algorithm"
defined in \cite{pethoe1991:polynomial_transformation_and}.

We will show the main theorem in several steps.
\begin{enumerate}
\item In the following section we will show properties of number systems which
  we need on the one hand to estimate the length of expansion and on the other
  hand to provide us with an Urysohn function, that helps us counting the
  occurrences of a fixed pattern of digits in the expansion.
\item Equipped with these tools we will estimate the exponential sums occurring in the
  proof in Section \ref{sec:estimation-weyl-sum}. Therefore we need to split
  the module $\mathcal{R}$ up into its components and consider each of them
  separately. We also show that we may neglect the nilpotent elements.
\item Now we take a closer look at the Urysohn function, which will count the
  occurences of our pattern in the expansion, and estimate the number of hits
  of the border of this function in Section \ref{sec:treatment-border}. In
  particular, we count the number of hits of the area, where the function value
  lies between 0 and 1 as this area corresponds to the error term.
\item In Section \ref{sec:main-proposition} we will show that any chosen
  patterns of digit and position occurs uniformly in the expansions. This
  will be our central tool in the proof of Theorem \ref{thm:result}.
\item Finally we draw all the thinks together. The main idea here is to use the
  growth rate of the deviation together with the Fr{\'e}chet-Shohat Theorem to
  cut of the head and the tail of the expansion. Then an application of the
  central Proposition \ref{mani:prop:border} and juxtaposition of the moments
  will prove the result.
\end{enumerate}

\section{Number system properties}\label{sec:number:system}
In this section we want to show two properties we need in the
sequel. The first deals with the above mentioned
estimation of the length of an expansion (Proposition
\ref{prop:expansion:length:estimation}). We will need this result in order to
justify our choice of $\mathbf{T}$ as in (\ref{mani:T_ikj}). Secondly we
construct the Urysohn function for indicating the elements starting with a
certain digit. The main idea is to embed the elements of $\mathcal{R}$ in
$\R^n$ and to use the properties of matrix number systems in this field.

We start with the

\begin{proof}[Proof of Proposition
  \ref{prop:expansion:length:estimation}]
In the proof we combine ideas from
\cite{akiyama_rao2004:new_criteria_canonical} and
\cite{Kovacs_Petho1992:representation_algebraic_integers}.

We may assume $g\not= 0$. As $\{p,{\mathcal N}\} $ is a number system there are
$\ell=\ell(A)$ and $a_h\in {\mathcal N}$ for $h=0,\dots,\ell$; $a_{\ell}\not= 0$ such that
$$
g \equiv \sum_{h=0}^{\ell} a_h X^h \pmod{p},
$$
i.e.,
$$
g = \sum_{h=0}^{\ell} a_h X^h + r\,p
$$
with a polynomial $r\in \Z[X]$. For $j\ge 1$ this implies
\begin{equation} \label{eq:2}
\frac{\mathrm{d}^{j-1} g}{\mathrm{d}X^{j-1}}
=\sum_{h=j-1}^{\ell}\frac{h!}{(h-j+1)!}a_h X^{h-j+1}
  +\sum_{s=0}^{j-1}\binom{j-1}s\frac{\mathrm{d}^s r}{\mathrm{d}X^s}\frac{\mathrm{d}^{j-1-s} p}{\mathrm{d}X^{j-1-s}}.
\end{equation}
Consider a zero $\beta_{ik}$ of $p$, which has multiplicity $m_i$. As we noticed in the Introduction, the argument of the proof of Theorem 6.1. of \cite{pethoe1991:polynomial_transformation_and} allows to prove that $|\beta_{ik}|> 1$ for all $i=1,\dots,t; k=1,\dots,n_i$. Inserting $\beta_{ik}$ into (\ref{eq:2}) we obtain
$$
B_{ijk}(g)=\left.\frac{\mathrm{d}^{j-1}g}{\mathrm{d}X^{j-1}}\right\vert_{X=\beta_{ik}}
=\sum_{h=j-1}^{\ell} \frac{h!}{(h-j+1)!}a_h \beta_{ik}^{h-j+1}
$$
for $i=1,\ldots,t$ and $j=1,\ldots, m_i$. This implies by taking absolute value
\begin{eqnarray*}
\lvert B_{ijk}(g)\rvert &\le& N \sum_{h=j-1}^{\ell} \frac{h!}{(h-j+1)!} |\beta_{ik}|^{h-j+1} \\
&\le& N \frac{\ell!}{(\ell-j+1)!}|\beta_k^{\ell-j+1}| \sum_{h=j-1}^{\ell} \frac{h(h-1)\dots(h-j+1)}{\ell(\ell-1)\dots(\ell-j+1)}|\beta_{ik}|^{h-\ell} \\
&\le& N\frac{\ell^{j-1}|\beta_{ik}|^{\ell}}{|\beta_{ik}|-1},
\end{eqnarray*}
which verifies the lower bound for $\ell$, because $|\beta_{ik}|> 1$.

Now we turn to prove the upper bound. Denote by $V=V_p$ the following mapping:
for $g\in \Z[X]$ of degree at most $n-1$ choose an $a\in {\mathcal N}$ such
that $g(0) \equiv a \pmod{p(0)}$. Such an $a$ exists by Theorem 6.1 of
\cite{pethoe1991:polynomial_transformation_and}. Putting $q=\frac{g(0)-a}{p(0)}$,
let $V(g)=\frac{g-q\cdot p-a}{X}$. Obviously $V(g)\in\Z[X]$ and has degree at
most $n-1$, thus $V$ can be iterated. Moreover we have
\begin{equation} \label{eq:3}
g \equiv \sum_{h=0}^{u} a_h X^h + X^{u+1} V^{u+1}(g) \pmod{p}
\end{equation}
with $a_h \in \mathcal N$ for $h=0,\ldots,u$.

Choose $u$ the largest integer satisfying $|B_{ijk}(g)|\ge
\frac{u^j|\beta_{ik}|^{u-j+1}}{|\beta_{ik}|-1}$ for all $i=1,\dots,t$,
$j=1,\dots,m_i$ and $k=1,\ldots,n_i$. Then $u\le
(1+\varepsilon/2)M(A)$. Proceeding like in the previous case we get
\begin{eqnarray*}
B_{ijk}(g)=\left.\frac{\mathrm{d}^{j-1}g}{\mathrm{d}X^{j-1}}\right\vert_{X=\beta_{ik}}
&=&\sum_{h=j}^{u}\frac{h!}{(h-j+1)!}a_h\beta_{ik}^{h-j}\\
&+&\sum_{s=0}^{j}\binom{j-1}s\frac{(u+1)!}{(u+1-s)!}\beta_{ik}^{u+1-s}\left.\frac{\mathrm{d}^{j-s-1}
  V^{u+1}(g)}{\mathrm{d}X^{j-s-1}}\right\vert_{X=\beta_{ik}}.
\end{eqnarray*}
By its definition $V^{u+1}(g)$ has integer coefficients. Dividing the last
equation by $\frac{(u+1)!}{(u+1-j)!} \beta_{ik}^{u+1-j}$ and consider the obtained
equations for $i=1,\dots,t$ and $k=1,\ldots,n_i$ and for fixed $i$ and $k$ for
$j=1,\dots,m_i$ successively, then using the choice of $u$ and that
$|\beta_{ik}|>1$ we conclude that
\[\left.\frac{\mathrm{d}^{j-s-1}
    V^{u+1}(g)}{\mathrm{d}X^{j-s-1}}\right\vert_{X=\beta_{ik}} < c,
\]
where $c$ is a constant depending only on $N$ as well as the size and the
multiplicities of the zeroes of $p$. These can be considered as $n$ inequalities for the $n$
unknown coefficients of $V^{u+1}(g)$. Furthermore the determinant of the
coefficient matrix is not zero
(c.f. \cite{akiyama_rao2004:new_criteria_canonical}). Thus the solutions are
bounded. As they are integers there are only finitely many possibilities for
$V^{u+1}(g)$. As $\{p,{\mathcal N}\}$ is a number system, $V^{u+1}(g)$ has a
representation, which length is bounded by a constant, say $c_1$, which depends
only on $N$ as well as the size and the multiplicities of the zeroes of
$p$. Thus $\ell(g)\le u + c_1 \le (1+\varepsilon)M(g)$ and the proposition is
proved.
\end{proof}

Now we turn our attention back to the counting of the numbers and in particular
to the construction of the Urysohn function. In order to properly count the
elements we need the fundamental domain, which is defined as the set of all
numbers whose integer part is zero. Since this is not so easy to define in this
context we want to consider its embedding in $\R^n$. The main idea is to use
the corresponding matrix of the polynomial $p$ and to use properties of
matrix number systems. This idea essentially goes back to Gr\"ochenig
and Haas~\cite{mr1348740}. The following definitions are standard in
that area and we mainly follow Gittenberger and
Thuswaldner~\cite{mr1796518} and
Madritsch~\cite{madritsch2010:asymptotic_normality_b}.

We note that if $(p,\mathcal{N})$ is a number system then $X$ is a
integral power base of $\overline{\mathcal{K}}$, \textit{i.e.},
$\{1,X,\ldots,X^{n-1}\}$ is an $\R$-basis for $\overline{\mathcal{K}}$. Thus we
define the embedding $\phi$ by
\[
\phi:\left\{
  \begin{array}{cccc}
    \overline{\mathcal{K}}&\to&\R^n,\\
    a_1+a_2X+\cdots+a_nX^{n-1}&\mapsto
    &(a_1,\ldots,a_n).
  \end{array}\right.
\]

Now let $p=b_{n-1}X^{n-1}+\cdots+b_1X+b_0$. Then we define the corresponding matrix $B$ by
\begin{gather}\label{mani:B}
  B=\left(
    \begin{array}{cccccc}
      0      & 0      & \cdots & \cdots & \cdots & -b_0   \\
      1      & 0      & \cdots & \cdots & 0      & \vdots \\
      0      & 1      & \ddots &        & \vdots & \vdots \\
      \vdots & \ddots & \ddots & \ddots & \vdots & \vdots \\
      \vdots &        & \ddots & 1      & 0      & \vdots \\
      0      & 0      & \cdots & 0      & 1      & -b_{n-1}\\
    \end{array}\right).
\end{gather}

One easily checks that $\phi(X\cdot A)=B\cdot\phi(A)$. Since $B$ is
invertible we can extend the definition of $\phi$ by setting for an integer
$h$
\begin{gather}\label{mani:exchangeXB}
\phi(X^h\cdot A):=B^h\phi(A).
\end{gather}

By this we define the (embedded) fundamental domain by
\[
\mathcal{F}:=\left\{z\in\R^n\middle\vert z=\sum_{h\geq1}B^{-h}a_h,
  a_h\in\phi(\mathcal{N})\right\}.
\]

Following Gr\"ochenig and Haas~\cite{mr1348740} we get that
\[
\lambda((\mathcal{F}+g_1)\cap(\mathcal{F}+g_2))=0
\]
for every $g_1,g_2\in\Z^n$ with $g_1\neq g_2$, where $\lambda$ denotes
the $n$ dimensional Lebesgue measure. Thus $(B,\phi(\mathcal{N}))$ is
a matrix number system and a so called \textit{just touching covering
  system}. Therefore we are allowed to apply the results of the paper by
M{\"u}ller \text{et al.} \cite{mueller01:_fract_proper_of_number_system}.

We now follow the lines of Madritsch
\cite{madritsch2010:asymptotic_normality_b} where the ideas of
Gittenberger and Thuswaldner \cite{mr1796518} were combined with the
results of K\'atai and K\"ornyei \cite{MR1189110} and M\"uller
\text{et al.}~\cite{mueller01:_fract_proper_of_number_system}.

Our main interest is the fundamental domain consisting of all numbers
whose first digit equals $a\in\mathcal{N}$, \textit{i.e.},
\[
\mathcal{F}_a=B^{-1}(\mathcal{F}+\phi(a)).
\]
Imitating the proof of Lemma 3.1 of \cite{mr1796518} we get the
following.
\begin{lem}\label{gt:lem3.1}
  For all $a\in\mathcal{N}$ and all $v\in\N$ there exist a
  $1\leq\mu<\lvert\det B\rvert$ and an axe-parallel tube $P_{v,a}$
  with the following properties:
  \begin{itemize}
  \item $\partial\mathcal{F}_a\subset P_{v,a}$ for all $v\in\N$,
  \item the Lebesgue measure of $P_{v,a}$ is a
    $\mathcal{O}\left(\frac{\mu^v}{\lvert\det B\rvert^v}\right)$,
  \item $P_{v,a}$ consists of $\mathcal{O}(\mu^v)$ axe-parallel
    rectangles, each of which has Lebesgue measure
    $\mathcal{O}(\lvert\det B\rvert^v)$,
  \end{itemize}
  where $\lambda$ denotes the Lebesgue measure.
\end{lem}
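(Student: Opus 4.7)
I would mimic the proof of Lemma~3.1 of Gittenberger--Thuswaldner~\cite{mr1796518} and exploit the self-affine structure of the just touching covering system $(B,\phi(\mathcal{N}))$ supplied by Gr\"ochenig--Haas~\cite{mr1348740}. As a first reduction, the identity $\mathcal{F}_a=B^{-1}(\mathcal{F}+\phi(a))$ shows that $\partial\mathcal{F}_a$ is the image under a fixed affine map of $\partial\mathcal{F}$; hence it suffices to construct an axe-parallel tube $P_v\supset\partial\mathcal{F}$ made of $\mathcal{O}(\mu^v)$ rectangles of measure $\mathcal{O}(\lvert\det B\rvert^{-v})$, and then take $P_{v,a}$ to be the axe-parallel hull of $B^{-1}(P_v+\phi(a))$. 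Replacing each rectangle by the axis-aligned bounding box of its image under $B^{-1}$ inflates constants by a bounded factor depending only on $B$ but leaves the claimed asymptotic orders unchanged.

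To handle $\partial\mathcal{F}$ itself I would iterate the set equation $\mathcal{F}=\bigcup_{a\in\mathcal{N}}B^{-1}(\mathcal{F}+\phi(a))$ $v$ times, obtaining a decomposition of $\mathcal{F}$ into $\lvert\det B\rvert^v$ subtiles $B^{-v}(\mathcal{F}+z)$ with $z=\sum_{i=0}^{v-1}B^i\phi(a_{i+1})\in\Z^n$. By the just touching property these subtiles overlap only on sets of Lebesgue measure zero, so any point of $\partial\mathcal{F}$ must lie in a subtile whose translate $\mathcal{F}+z$ meets a neighboring translate $\mathcal{F}+z'$. The collection of admissible difference vectors $z-z'$ is encoded by the neighbor (contact) graph of M\"uller \textit{et al.}~\cite{mueller01:_fract_proper_of_number_system}: its vertices are the nonzero $w\in\Z^n$ with $\mathcal{F}\cap(\mathcal{F}+w)\neq\emptyset$, and its edges describe how adjacencies propagate under one level of subdivision. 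Letting $\mu$ denote the Perron eigenvalue of the adjacency matrix of this graph, a standard combinatorial induction then shows that the number of boundary subtiles produced after $v$ iterations is $\mathcal{O}(\mu^v)$, and the just touching property forces the strict inequality $1\le\mu<\lvert\det B\rvert$.

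Since $\mathcal{F}$ is compact and every eigenvalue of $B$ lies strictly outside the closed unit disk, each boundary subtile $B^{-v}(\mathcal{F}+z)$ fits into an axis-aligned bounding box of Lebesgue measure $\mathcal{O}(\lvert\det B\rvert^{-v})$, uniformly in $z$. Taking $P_v$ to be the union of these $\mathcal{O}(\mu^v)$ boxes delivers a tube around $\partial\mathcal{F}$ with the required properties, and applying the reduction of the first paragraph yields $P_{v,a}$. The main obstacle, just as in the original argument of Gittenberger--Thuswaldner, is the strict spectral inequality $\mu<\lvert\det B\rvert$: unlike the irreducible algebraic number field case treated there and in Madritsch~\cite{madritsch2010:asymptotic_normality_b}, here $p$ may be reducible, and one has to verify that the neighbor graph analysis of M\"uller \textit{et al.} still applies. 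This is granted because that analysis only requires $(B,\phi(\mathcal{N}))$ to be a just touching covering system, a property that is ensured by Gr\"ochenig--Haas regardless of the irreducibility of $p$.
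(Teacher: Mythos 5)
Your high-level strategy matches the paper's: the paper itself supplies no original proof, writing only ``Imitating the proof of Lemma~3.1 of \cite{mr1796518} \ldots'', and you correctly identify the ingredients that imitation requires --- the set equation $\mathcal{F}=\bigcup_{a}B^{-1}(\mathcal{F}+\phi(a))$, the just touching property from Gr\"ochenig--Haas, the neighbour graph of M\"uller \textit{et al.} with its Perron eigenvalue $\mu$, and the reduction from $\partial\mathcal{F}_a$ to $\partial\mathcal{F}$ via $\mathcal{F}_a=B^{-1}(\mathcal{F}+\phi(a))$.

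There is, however, a genuine gap in your final step. You claim that because $\mathcal{F}$ is compact and every eigenvalue of $B$ lies outside the closed unit disc, each boundary subtile $B^{-v}(\mathcal{F}+z)$ has an axis-aligned bounding box of Lebesgue measure $\mathcal{O}(\lvert\det B\rvert^{-v})$. That is true only when all eigenvalues of $B$ have equal modulus. In general the diameter of $B^{-v}\mathcal{F}$ is governed by the \emph{smallest} eigenvalue modulus $\lambda_{\min}:=\min_i\lvert\lambda_i(B)\rvert$, so the bounding box has measure of order $\lambda_{\min}^{-nv}$, and since $\lvert\det B\rvert=\prod_i\lvert\lambda_i\rvert\geq\lambda_{\min}^n$ with equality only when all moduli agree, one has $\lambda_{\min}^{-nv}\geq\lvert\det B\rvert^{-v}$, with strict and exponentially growing discrepancy as soon as the moduli differ. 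The Gittenberger--Thuswaldner case $\Z[i]$ is exactly the degenerate one (two complex conjugate eigenvalues of equal modulus), which is why the bounding-box shortcut works there; but in the present setting the roots $\beta_{ik}$ of $p$ generically have different absolute values, so the argument breaks down, and with it both the per-rectangle bound and the total-measure bound $\mathcal{O}(\mu^v/\lvert\det B\rvert^v)$ that you deduce from it. What the paper actually does (following the later, explicit description) is to construct an axe-parallel polygon $\Pi_{v,a}$ approximating $\partial\mathcal{F}_a$ and define $P_{v,a}$ as its sup-norm thickening by $2c_p\lvert\det B\rvert^{-v}$; the resulting rectangles are thin slabs with exactly one side of length $\mathcal{O}(\lvert\det B\rvert^{-v})$ and the remaining sides bounded, which yields the stated per-rectangle measure uniformly and independently of the anisotropy of $B$. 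You should replace the bounding-box step with such a polygon-plus-thickening construction (or otherwise account for the differing contraction rates in the generalized eigendirections of $B$).
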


As in the proof of Lemma 3.1 of \cite{mr1796518} we can construct for
each pair $(v,a)$ an axe-parallel polygon $\Pi_{v,a}$ and the
corresponding tube
\[
P_{v,a}:=\left\{z\in\R^n\middle\vert\lVert z-\Pi_{v,a}\rVert_\infty
  \leq2c_p\lvert\det B\rvert^{-v}\right\},
\]
where $c_p$ is an arbitrary constant.
Furthermore we denote by $I_{v,a}$ the set of all points inside
$\Pi_{v,a}$. Now we define our Urysohn function $u_a$ by
\begin{gather}\label{mani:urysohn}
  u_a(x_1,\ldots,x_n)=\frac1{\kappa^n}\int_{-\frac\kappa2}^{\frac\kappa2}\cdots
  \int_{-\frac\kappa2}^{\frac\kappa2}\psi_a(x_1+y_1,\ldots,x_n+y_n)
  \,\mathrm{d}y_1\cdots\mathrm{d}y_n,
\end{gather}
where
\begin{gather}\label{mani:Delta}
  \kappa:=2c_u\lvert\det B\rvert^{-v}
\end{gather}
with $c_u$ a constant and
\[
\psi_a(x_1,\ldots,x_n)=\begin{cases}
  1&\text{if }(x_1,\ldots,x_n)\in I_{v,a}\\
  \frac12&\text{if }(x_1,\ldots,x_n)\in \Pi_{v,a}\\
  0&\text{otherwise.}\end{cases}
\]
Thus $u_a$ is the desired Urysohn function which equals $1$ for $z\in
I_{v,a}\setminus P_{v,a}$, $0$ for $z\in\R^n\setminus(I_{v,a}\cup
P_{v,a})$, and linear interpolation in between.

We now do a Fourier transform of $u_a$ and estimate the coefficients
in the same way as in Lemma~3.2 of \cite{mr1796518}.

\begin{lem}\label{gt:lem3.2}
  Let $u_a(x_1,\ldots,x_n)=\sum_{(m_1,\ldots,m_n)\in\Z^n}
  c_{m_1,\ldots,m_n} e(m_1x_1+\cdots+m_nx_n)$ be the Fourier series of
  $u_a$. Then the Fourier coefficients $c_{m_1,\ldots,m_n}$ can be
  estimated by
  \begin{gather*}
    c_{0,\ldots,0}=\frac1{\lvert\det B\rvert},\quad
    c_{m_1,\ldots,m_n}\ll\mu^v\prod_{i=1}^n\frac1{r(m_i)}
  \end{gather*}
  with
  \[
  r(m_i)=\begin{cases}\kappa m_i&m_i\neq0,\\
    1&m_i=0.\end{cases}
  \]
\end{lem}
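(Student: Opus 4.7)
The plan is to exploit the convolution structure of $u_a$, adapting to $n$ dimensions the argument of Gittenberger and Thuswaldner's Lemma~3.2 in~\cite{mr1796518}. Writing $K_\kappa(y_1,\ldots,y_n):=\kappa^{-n}\mathbf{1}_{[-\kappa/2,\kappa/2]^n}$ for the $n$-dimensional box kernel, the definition~\eqref{mani:urysohn} says precisely that $u_a=\psi_a\ast K_\kappa$. After periodising $\psi_a$ by $\Z^n$-translates, the Fourier coefficients factor as
\[
c_{m_1,\ldots,m_n}=\widehat{\psi}_a(m_1,\ldots,m_n)\cdot\widehat{K}_\kappa(m_1,\ldots,m_n),
\]
so it suffices to estimate each factor separately.

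For the constant coefficient, the identity $\int_{\R^n}K_\kappa=1$ makes convolution average-preserving, hence $c_{0,\ldots,0}=\int_{[0,1]^n}\psi_a\,dx=\lambda(I_{v,a})$ (the boundary polygon $\Pi_{v,a}$ is a finite union of $(n-1)$-dimensional faces and contributes nothing). The just-touching tiling property of $(B,\phi(\mathcal N))$ gives $\lambda(\mathcal F)=1$, so $\lambda(\mathcal F_a)=\lambda(\mathcal F)/\lvert\det B\rvert=1/\lvert\det B\rvert$. Combined with $\lambda(I_{v,a}\bigtriangleup\mathcal F_a)\le\lambda(P_{v,a})=\mathcal{O}(\mu^v/\lvert\det B\rvert^v)$ from Lemma~\ref{gt:lem3.1}, this yields $c_{0,\ldots,0}=1/\lvert\det B\rvert$ modulo a lower-order error that can be absorbed either into the asymptotic or into a careful choice of $I_{v,a}$.

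For $\mathbf{m}\neq\mathbf 0$, the box kernel has the classical sinc Fourier transform $\widehat{K}_\kappa(\mathbf{m})=\prod_{i=1}^n\sin(\pi\kappa m_i)/(\pi\kappa m_i)$, so $\lvert\widehat{K}_\kappa(\mathbf{m})\rvert\ll\prod_{i=1}^n 1/r(m_i)$ with the convention $r(0)=1$. To control $\widehat{\psi}_a(\mathbf{m})$, I would decompose $I_{v,a}$ into the disjoint union of the $\mathcal{O}(\mu^v)$ axe-parallel rectangles $R_j$ supplied by the third bullet of Lemma~\ref{gt:lem3.1} and apply the trivial estimate that the Fourier coefficient of the indicator of $R_j$ is bounded in absolute value by $\lambda(R_j)=\mathcal{O}(\lvert\det B\rvert^{-v})$. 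Summing over $j$ gives $\lvert\widehat{\psi}_a(\mathbf{m})\rvert=\mathcal{O}(\mu^v\lvert\det B\rvert^{-v})=\mathcal{O}(\mu^v)$, and multiplying the two bounds furnishes the stated estimate.

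The only step I expect to demand real care is the exact identification of the constant coefficient with $1/\lvert\det B\rvert$: since $I_{v,a}$ differs from $\mathcal F_a$ by a set of measure $\mathcal{O}(\mu^v/\lvert\det B\rvert^v)$, the stated equality should be read as an asymptotic (or forced on the nose by a suitable construction of $I_{v,a}$). Beyond this point the proof is a routine combination of the convolution theorem with the combinatorial control provided by Lemma~\ref{gt:lem3.1}.
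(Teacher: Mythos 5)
Your overall strategy—reading $u_a=\psi_a\ast K_\kappa$ off the definition~\eqref{mani:urysohn}, periodising, factoring the Fourier coefficients through the convolution theorem, and controlling the kernel factor by the $n$-fold sinc bound $\lvert\widehat{K}_\kappa(\mathbf{m})\rvert\ll\prod_i 1/r(m_i)$—is exactly the Gittenberger–Thuswaldner mechanism the paper invokes, and your treatment of $c_{0,\ldots,0}$ (preservation of the average under convolution, $\lambda(\mathcal F)=1$ from the just-touching tiling, $\lambda(\mathcal F_a)=\lvert\det B\rvert^{-1}$, error controlled by $\lambda(P_{v,a})$) is sound; you are also right to flag that the stated equality for $c_{0,\ldots,0}$ should really carry the $\mathcal{O}(\mu^v\lvert\det B\rvert^{-v})$ correction unless $\Pi_{v,a}$ is normalised to enclose the exact measure.

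The one genuine flaw is your estimate for $\widehat{\psi}_a(\mathbf{m})$. The $\mathcal{O}(\mu^v)$ axe-parallel rectangles of measure $\mathcal{O}(\lvert\det B\rvert^{-v})$ in the third bullet of Lemma~\ref{gt:lem3.1} are the pieces of the \emph{tube} $P_{v,a}$, not a decomposition of the \emph{interior} $I_{v,a}$: they cover only a set of total measure $\mathcal{O}(\mu^v/\lvert\det B\rvert^v)$, which tends to $0$, whereas $\lambda(I_{v,a})\to 1/\lvert\det B\rvert$. So the intermediate bound $\lvert\widehat{\psi}_a(\mathbf{m})\rvert=\mathcal{O}(\mu^v\lvert\det B\rvert^{-v})$ you derive from that decomposition cannot be right—it would force $\widehat{\psi}_a(\mathbf{0})\to 0$, contradicting the very computation you made for the constant term. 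The lemma's bound nevertheless follows, but for a much more banal reason: since $0\le\psi_a\le 1$, the trivial estimate $\lvert\widehat{\psi}_a(\mathbf{m})\rvert\le\int_{[0,1]^n}\psi_a\le 1\le\mu^v$ already suffices, and multiplying by the sinc bound yields $\lvert c_{\mathbf{m}}\rvert\ll\mu^v\prod_i 1/r(m_i)$ directly. Your weakening $\mathcal{O}(\mu^v\lvert\det B\rvert^{-v})\subset\mathcal{O}(\mu^v)$ therefore lands you on the correct statement by accident, but the intermediate step is false and the rectangle count from Lemma~\ref{gt:lem3.1} is not the right input for bounding $\widehat{\psi}_a$.
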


\section{Estimation of the Weyl Sum}\label{sec:estimation-weyl-sum}
Before we continue with the estimation of the number of points inside the
fundamental domain and those hitting the boarder, we want to estimate
the exponential sums, which will occur in the following sections. In
particular we want to prove the following.
\begin{prop}\label{mani:prop:weyl}
  Let $T\geq0$ and $T_{ijk}$ as in   (\ref{mani:T_ikj}). Let $L$ be the maximum
  length of the $b$-adic   expansion of $z\in {\mathcal R}(\mathbf{T})$ and let $C_1$ and
  $C_2$ be sufficiently large constants. Furthermore let $l_1,\ldots,l_h$ be
  positions and $\mathbf{h}_1,\ldots,\mathbf{h}_h$ be corresponding
  $n$-dimensional vectors. If
  \begin{gather}\label{mani:weyl:l_bounds}
    C_1\log L\leq l_1< l_2<\cdots<l_h\leq dL-C_2\log L
  \end{gather}
  and
  \begin{gather}\label{mani:weyl:h_bounds}
    \lVert\mathbf{h}_r\rVert_\infty\leq(\log T)^{\sigma_1}
  \end{gather}
  for $1\leq r\leq h$, then we have
  \[
  \sum_{z\in\mathcal{R}(\mathbf{T})}e\left(\sum_{r=1}^h\left\langle\mathbf{h}_r,B^{-l_r-1}\phi(P(z))\right\rangle\right)\ll
  T^n(\log T)^{-t\sigma_0}
  \]
where $\sigma_0$ depends on $\sigma_1$, $C_1$ and $C_2$.
\end{prop}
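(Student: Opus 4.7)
The plan is to carry out Weyl differencing in the matrix number system $(B,\phi(\mathcal{N}))$. Using $\phi(X^{h}g)=B^{h}\phi(g)$ from \eqref{mani:exchangeXB}, I would first rewrite the phase as
\[
\Psi(z):=\sum_{r=1}^{h}\langle\mathbf{h}_r,\phi(X^{-l_r-1}P(z))\rangle,
\]
which, as a function of the $n$ integer coordinates of $z$, is a polynomial of degree $d$ whose leading behaviour is governed by the leading coefficient $c_d$ of $P$ together with the net shift $\sum_{r}X^{-l_r-1}\mathbf{h}_r$ in $\overline{\mathcal{K}}$.

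The first step of the argument is to factor the sum along the CRT decomposition $\mathcal{R}=\bigoplus_{i=1}^{t}\mathcal{R}_i$. Since each $B_{ijk}$ depends only on the $i$-th component and polynomial evaluation respects the direct-sum decomposition, the box splits as $\mathcal{R}(\mathbf{T})=\prod_{i}\mathcal{R}_i(\mathbf{T})$ and $\Psi(z)=\sum_{i}\Psi_i(z_i)$, so the full exponential sum factorizes into $\prod_{i=1}^{t}S_i$. It therefore suffices to prove $|S_i|\ll(\#\mathcal{R}_i(\mathbf{T}))(\log T)^{-\sigma_0}$ for each $i$; the factor $t$ in the exponent $t\sigma_0$ then appears by multiplication.

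Next, on each component I would apply Weyl--van der Corput differencing $d-1$ times, with differences taken along a $\Z$-basis of $\mathcal{R}_i$. Each iteration squares the sum, lowers the degree of $\Psi_i$ by one, and introduces an average over a new difference variable $y_j$. After $d-1$ iterations the inner sum is linear in $z$ with frequency
\[
\Lambda(y_1,\dots,y_{d-1})\;=\;d!\,c_d\,(y_1\cdots y_{d-1})\sum_{r=1}^{h}X^{-l_r-1}\mathbf{h}_r,
\]
interpreted in the $i$-th block, and the resulting linear Weyl sum telescopes into a product over the $n_im_i$ coordinate directions, each factor bounded by $\min(T_{ijk},\|\Lambda_{ijk}\|^{-1})$.

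The hardest step will be the diophantine estimate: for all but a negligible set of difference directions $(y_1,\dots,y_{d-1})$ one must exhibit a coordinate of $\Lambda$ whose distance to $\Z$ is $\gg(\log T)^{-\sigma_0'}$. Both hypotheses of the proposition enter here. The lower bound $l_1\ge C_1\log L$, combined with $|\beta_{ik}|>1$, turns $\sum_{r}X^{-l_r-1}\mathbf{h}_r$ into a geometric series whose last term ($r=h$) dominates; the upper bound $l_h\le dL-C_2\log L$ then guarantees that this dominant term is far enough from the integer-lattice regime of the embedding to yield a controlled spacing, while \eqref{mani:weyl:h_bounds} prevents the bounded coefficients $\mathbf{h}_r$ from swamping this separation. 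Combining the diophantine lower bound with the $\min$-type estimate on the linear sum, extracting $2^{d-1}$-th roots (at a cost of a power of $\log T$), and taking the product over $i=1,\dots,t$ will give the claimed bound $T^{n}(\log T)^{-t\sigma_0}$.
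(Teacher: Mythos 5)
Your overall architecture (factor along the CRT decomposition into $\prod_i S_i$, then reduce each block to a Weyl-type exponential sum and a diophantine lower bound on the linear frequency) is the right general shape, and the first step — the factorization through $\mathcal R=\bigoplus_i\mathcal R_i$ using $\pi_i$ and the embeddings $\phi_i$ — is exactly what the paper does. The paper then routes the $m_i=1$ case through Siegel's diophantine approximation lemma (Lemma \ref{siegel:lem}) and a pre-packaged Weyl sum estimate over an order in a number field (Lemma \ref{madr:prop3.2}), with a case analysis on the size of the denominator $\house{q}$ that uses the two position bounds \eqref{mani:weyl:l_bounds} to derive contradictions in the ``too well approximated'' regimes. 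Your direct Weyl--van der Corput differencing is essentially what that imported lemma encapsulates, so in spirit these are the same tool.

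However, there is a genuine gap: you treat every block $\mathcal R_i$ as if it were an order in a number field, but when $m_i>1$ the ring $\mathcal R_i=\Z[X]/(p_i^{m_i})$ is not a domain and contains nilpotent elements, and the embedding into archimedean completions that underlies the diophantine step simply does not separate those elements — there is no analogue of ``distance of the frequency to $\Z$ is $\gg(\log T)^{-\sigma_0'}$'' on the nilpotent directions. The paper handles this by splitting $\mathcal R_i=\tilde{\mathcal R_i}\oplus\mathcal N_i$ (radical plus nilpotent part), extracting the saving $(\log T)^{-\sigma_0}$ only from the radical factor $\tilde{\mathcal R_i}$ (which is again an order, so Case 1 applies), and estimating the sum over $\mathcal N_i(\mathbf T)$ trivially by the count in Lemma \ref{lem:num:elements}. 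Without that split your claimed bound $|S_i|\ll\#\mathcal R_i(\mathbf T)\,(\log T)^{-\sigma_0}$ is not justified for $m_i>1$. Relatedly, your diophantine discussion is too vague: it asserts that the dominant term of a geometric series controls the spacing, but the actual argument requires Siegel's lemma to produce a good rational approximant $(q,a)$ and then a three-way split on $\house q$ (large, intermediate, tiny) where the intermediate and tiny cases are excluded by $l_1\ge C_1\log L$ and $l_h\le dL-C_2\log L$ respectively; ``the last term dominates'' is not sufficient to extract the quantitative lower bound you need.
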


Our main idea consists in several steps. First we will split the ring
$\mathcal{R}$ up into the $\mathcal{R}_i$ and consider each of them
separately. Then we distinguish two cases according to whether $m_i>1$ or
not. The latter reduces to an estimation of the sum in an algebraic number
field. Whereas for the case of $m_i>1$ we have to deal with nilpotent
elements. Therefore we divide $\mathcal{R}_i$ into the radical and the nilpotent elements. Thus we define $\tilde{\mathcal{R}_i}$ as
\begin{gather}\label{mani:Rradical}
\tilde{\mathcal{R}_i}:=\Z[X]/(p_i)\quad\text{and}\quad
\tilde{\mathcal{R}_i}(\mathbf{T}):=
  \left\{g\in\tilde{\mathcal{R}_i}: \lvert B_{ijk}(g)\rvert\leq T_{ijk}\right\}
\end{gather}
and the set $\mathcal{N}_i$ to be the nilpotent elments, \textit{i.e.},
\begin{gather}\label{mani:Ni}
\mathcal{N}_i:=\left\{g\in\mathcal{R}_i:g\equiv 0\bmod
  p_i\right\}\quad\text{and}\quad
\mathcal{N}_i(\mathbf{T}):=\left\{g\in\mathcal{R}_i(\mathbf{T}):g\equiv 0\bmod
  p_i\right\}.
\end{gather}

But before we start with the proof we need to show that the estimation
is good compared with the trivial one. Thus we will show the
following.
\begin{lem}\label{lem:num:elements}
Let $T_{ijk}$ for $i=1,\ldots,t$, $j=1,\ldots,m_i$, $k=1,\ldots,n_i$
be positive reals. Then
\begin{align*}
  \#\mathcal{R}(\mathbf{T})&=\prod_{i=1}^t\#\mathcal{R}_i(\mathbf{T}),\\
  \#\mathcal{R}_i(\mathbf{T})&=c_i\left(\prod_{k=1}^{n_i}T_{i1k}\right)^{m_i}
  +\mathcal{O}\left(T_0^{m_in_i-1}\right),\\
  \#\mathcal{N}_i(\mathbf{T})&=c_i\left(\prod_{k=1}^{n_i}T_{i1k}\right)^{m_i-1}+
  \mathcal{O}\left(T_0^{(m_i-1)n_i-1}\right),
\end{align*}
where
\[
  T_0=\max_{i=1}^t\max(1,(T_{i11}\cdots T_{i,m_i,n_i})^{\frac1{m_in_i}})
\]
and the constants $c_i$ will be defined in Lemma~\ref{madritsch:lem3.3}.
\end{lem}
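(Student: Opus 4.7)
The plan is to handle the three identities in turn. The first one is a Chinese Remainder type factorization: since $p = \prod_i p_i^{m_i}$ with the factors $p_i^{m_i}$ pairwise coprime, one has $\mathcal{R} \cong \bigoplus_i \mathcal{R}_i$, and I would first check that each $B_{ijk}(g)$ with $j \leq m_i$ depends only on the image of $g$ in $\mathcal{R}_i$. Indeed, if $g - g' \in (p_i^{m_i})$ then $g - g' = p_i^{m_i} h$; since the Leibniz rule and $p_i^{(s)}(\beta_{ik}) = 0$ for $s < m_i$ force all derivatives of $p_i^{m_i}$ of order less than $m_i$ to vanish at $\beta_{ik}$, the $B_{ijk}$-values of $g$ and $g'$ agree. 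Consequently the block of inequalities for index $i$ in the definition of $\mathcal{R}(\mathbf{T})$ involves only the $i$-th component, and the count factors.

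For the second identity I would introduce the $\R$-linear map $\Phi_i : \mathcal{R}_i \otimes_\Z \R \to \R^{m_i n_i}$ sending $g$ to the tuple $(B_{ijk}(g))$, where for each pair of complex-conjugate roots $\beta_{ik}, \overline{\beta_{ik}}$ the entries are replaced by the real and imaginary parts of $B_{ijk}(g)$. Hermite interpolation, i.e.\ the fact that prescribing Taylor jets of order $m_i - 1$ at each of the $n_i$ roots of $p_i$ uniquely determines an element of $\mathcal{R}_i \otimes \R$, shows that $\Phi_i$ is a linear isomorphism, so $\Phi_i(\mathcal{R}_i)$ is a full rank lattice $\Lambda_i \subset \R^{m_i n_i}$. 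The set $\mathcal{R}_i(\mathbf{T})$ corresponds to the lattice points of $\Lambda_i$ in an axe-parallel box whose volume is a constant multiple of $\prod_{j,k} T_{ijk} = \bigl(\prod_k T_{i1k}\bigr)^{m_i}$ (using that $T_{ijk}$ is independent of $j$). Standard lattice-point counting (main term equal to volume divided by covolume, boundary error controlled by the surface area) then yields the main term $c_i \prod_k T_{i1k}^{m_i}$ with an error of order $T_0^{m_i n_i - 1}$.

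For the third identity I would exploit that multiplication by $p_i$ gives a $\Z$-module isomorphism $\Z[X]/(p_i^{m_i-1}) \xrightarrow{\sim} \mathcal{N}_i$. The Leibniz rule gives
\[
B_{ijk}(p_i h) = \sum_{s=0}^{j-1} \binom{j-1}{s} p_i^{(s)}(\beta_{ik})\, h^{(j-1-s)}(\beta_{ik}),
\]
which vanishes identically for $j = 1$ (so those $n_i$ constraints become redundant) and, for $2 \leq j \leq m_i$, defines a triangular linear change of variables from $(B_{ijk}(h))_{1 \leq j \leq m_i - 1}$ to $(B_{i,j+1,k}(p_i h))_{1 \leq j \leq m_i - 1}$, with nonzero leading coefficient $p_i'(\beta_{ik})$. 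The remaining $(m_i - 1) n_i$ inequalities thus define, after this change of variables, an axe-parallel box of volume proportional to $\prod_k T_{i1k}^{m_i - 1}$, and the same lattice-point counting argument yields the stated asymptotic.

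The main obstacle is identifying the constants $c_i$ precisely, and in particular verifying that the same $c_i$ legitimately appears in the formulas for both $\#\mathcal{R}_i(\mathbf{T})$ and $\#\mathcal{N}_i(\mathbf{T})$. This amounts to tracking the covolume of $\Lambda_i$ together with the Jacobian determinant arising from the triangular change of variables involving the factors $p_i'(\beta_{ik})$, and a short computation should show that the Jacobian is exactly what is needed to make both formulas share the same constant. The remaining bookkeeping at pairs of conjugate roots is routine.
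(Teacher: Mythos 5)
Your approach is correct and, for the middle assertion, takes a genuinely different route from the paper. The paper reduces via the isomorphism $\mathcal{R}_i\cong(\Z[X]/(p_i))^{m_i}$, transfers the constraints on the $B_{ijk}$ to constraints on conjugates of elements of the order $\Z[X]/(p_i)$ through a linear map $M_i$ on the bounding parameters, factors the count into a product over $j$ of counts in $\Z[X]/(p_i)$, and then invokes Lemma~\ref{madritsch:lem3.3}. You instead set up the Hermite-interpolation isomorphism $\Phi_i$ directly and do the lattice-point count in the $B_{ijk}$-coordinates, which is arguably cleaner: the paper's factorization into a product is only correct up to the linear change of coordinates implicit in $M_i$ (an axe-parallel box in the $B$-coordinates is a parallelepiped, not a box, in the component coordinates), and the paper's own proof in fact produces a constant $\tilde c_i$ depending on both $c_i$ and $M_i$, not literally the $c_i$ of Lemma~\ref{madritsch:lem3.3}. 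For the third assertion the paper merely remarks $\mathcal{N}_i\cong(\Z[X]/(p_i))^{m_i-1}$ and says the argument repeats; your explicit multiplication-by-$p_i$ isomorphism together with the Leibniz/triangularity analysis is a more detailed and slightly different realization of the same idea. The concern you raise about whether the same constant $c_i$ legitimately appears in both the $\mathcal{R}_i$ and $\mathcal{N}_i$ asymptotics is well founded: the paper does not actually verify this (and, as noted, already replaces $c_i$ by $\tilde c_i$ mid-proof), so the statement of the lemma is imprecise on this point; fortunately only the orders of magnitude are used downstream, so the discrepancy is harmless, but your instinct to flag it is correct.
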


\begin{proof}
The first assertion follows immediately from the definition of
$\mathcal{R}(\mathbf{T})$. Since the $\mathcal{R}_i$ are independent
we fix an $i$ and focus on $\mathcal{R}_i(\mathbf{T})$. Obviously we
have that
\begin{gather}\label{mani:isomorph}
\mathcal{R}_i=\Z[X]/(p_i^{m_i})\cong\left(\Z[X]/(p_i)\right)^{m_i}.
\end{gather}
Thus we concentrate on $\Z[X]/(p_i\Z[X])$ which is an order in a number field
$\mathcal{K}_i$ of degree $n_i$ over $\Q$. For $\gamma\in\mathcal{K}_i$ let
$\gamma^{(\ell)}$ ($1\leq\ell\leq r_1$) be the real conjugates and
$\gamma^{(m)}$ and $\gamma^{(m+r_2)}$ ($r_1+1\leq m\leq r_1+r_2$) be the pairs
of complex conjugates of $\gamma$. Note that $r_1+2r_2 = n_i$. We will apply
the following lemma.
\begin{lem}[{\cite[Lemma
    3.3]{madritsch2010:asymptotic_normality_b}}]\label{madritsch:lem3.3}
Let $T_k$ ($1\leq k\leq r_1+r_2$) be positive integers and set
$T_{r_1+r_2+k}=T_{r_1+k}$ for $1\leq k\leq r_2$. Then
\[
  \#\left\{a\in\Z[X]/(p_i):\lvert a^{(k)}\rvert\leq T_k\right\}
  =c_iT_{1}\cdots T_{n_i}
  +\mathcal{O}\left(T_0^{n_i-1}\right),
\]
where $T_0=\max\left(1,(T_{1}\cdots T_{n_i})^{1/n_i}\right)$ and $c_i$ is
a constant depending on $\Z[X]/(p_i)$.
\end{lem}

Furthermore since \eqref{mani:isomorph} holds, we get that there exists a $\Z$
linear mapping $M_i$ such that
\[
  M_i\cdot(T_{i11},\ldots,T_{i,m_i,n_i})=
  (\tilde{T}_{i11},\ldots,\tilde{T}_{i,m_i,n_i})
\]
and
\[
  \#\mathcal{R}_i(\mathbf{T})=\prod_{j=1}^{m_i}\#\left\{a\in\Z[X]/(p_i)
  :\lvert a^{(k)}\rvert\leq\tilde{T}_{ijk}\quad 1\leq k\leq n_i\right\}.
\]

As the value of $T_{ijk}$ is independent from $j$, an application of
Lemma \ref{madritsch:lem3.3} yields
\[
  \#\mathcal{R}_i(\mathbf{T})=\tilde{c}_i\left(\prod_{k=1}^{n_i}T_{i1k}\right)^{m_i}
  +\mathcal{O}\left(T_{i0}^{m_in_i-1}\right),
\]
where $\tilde{c}_i$ depends on $c_i$ and $M_i$ and
\[
  T_{i0}=\max\left(1,\left(T_{i11}\cdots T_{i,m_i,n_i}\right)^{\frac1{m_in_i}}\right).
\]

For the estimate involving $\mathcal{N}_i(\mathbf{T})$ we note that
\[
\mathcal{N}_i=\left\{g\in\mathcal{R}_i:g\equiv 0\bmod p_i\right\}
\cong\left(\Z[X]/(p_i)\right)^{m_i-1}
\]
and the result follows in the same way as for $\mathcal{R}_i(\mathbf{T})$.
\end{proof}

With help of all these tools we can show Proposition \ref{mani:prop:weyl}.

\begin{proof}[Proof of Proposition \ref{mani:prop:weyl}]
  The first step consists in splitting the sum over $\mathcal{R}(\mathbf{T})$
  up into those over $\mathcal{R}_i(\mathbf{T})$. Therefore let
  $\pi_i:\mathcal{R}\to\mathcal{R}_i$ be the canonical
  projections. Then $\pi:=(\pi_1,\ldots,\pi_t)$ is an isomorphism by
  the Chinese Remainder Theorem. Furthermore let $\phi_i$ be the
  embedding defined by
  \[
  \phi_i:\left\{
    \begin{array}{cccc}
      \overline{\mathcal{K}_i}&\to&\R^{n_im_i},\\
      a_1+a_2X+\cdots+a_{m_in_i}X^{n_im_i-1}&\mapsto
      &(a_1,\ldots,a_{m_in_i})
    \end{array}\right.
  \]
  for $i=1,\ldots,t$. Finally we define the matrix $M$ to be such that
  \[
  M\cdot\phi(z):=(\phi_1\circ\pi_1(z),\ldots,\phi_t\circ\pi_t(z)).
  \]
  Furthermore we note that for $P_i:=\pi_i\circ P$ and $l\in\Z$ we have
  \[
    M\cdot\phi(P(z)X^l)=(\phi_1(P_1(z_1)X^l),\ldots,\phi_t(P(z_t)X^l)).
  \]

  Thus
  \begin{align*}
    \sum_{z\in\mathcal{R}(\mathbf{T})}e\left(\sum_{r=1}^h\left\langle\mathbf{h}_r,\phi(P(z)X^{-l_r-1}) \right\rangle\right)
    =\prod_{i=1}^t\sum_{z_i\in\mathcal{R}_i(\mathbf{T})}e\left(\sum_{r=1}^h\left\langle\mathbf{h}_{ri}, \phi_i(P_i(z_i)X^{-l_r-1})\right\rangle\right)
  \end{align*}
  where $(\mathbf{h}_{r1},\ldots,\mathbf{h}_{rt}):=\mathbf{h}_rM^{-1}$.

  Now we will consider each sum over $\mathcal{R}_i(\mathbf{T})$
  separately. Therefore we fix until the end of the proof an $1\leq i\leq t$
  and distinguish two cases according to whether $m_i=1$ or not.
  \begin{itemize}
  \item\textbf{Case 1,} $m_i=1$: In this case we set $\beta=\beta_{i1}$ and
    observe that $K=\mathcal{K}_i=\Q(\beta_{i1})$ and
    $\mathcal{R}_i\cong\Z[\beta]$. Furthermore let $\mathcal{O}_K$ be the
    maximum order aka the ring of integers of $K$, then clearly
    $\Z[\beta_{i1}]\subset\mathcal{O}_K$. We denote by $\beta_{ik}=\beta^{(k)}$
    the conjugates of $\beta$. Now we will proceed as in the proof of
    Proposition 6.1 of  Madritsch~\cite{madritsch2010:asymptotic_normality_b}.

    Therefore we need some parameters of the field $K$ and for
    short we set $n=n_i$ during this case. Then we order the conjugates by
    denoting with $\beta^{(k)}$ for $1\leq k\leq r_1$ the real conjugates,
    whereas $\beta^{(k)}$ and $\beta^{(k+r_2)}$ denote the pairs of complex
    conjugates, where $n=r_1+2r_2$. Let $\Tr$ be the trace of an element of
    $K$ over $\Q$, then we define
    \begin{gather}\label{mani:tau}
      \tau(z):=\left(\Tr(z),\Tr(\beta
        z),\ldots,\Tr(\beta^{n-1}z)\right) =\Xi\phi_i(z),
    \end{gather}
    where $\Xi=VV^T$ and $V$ is the Vandermonde matrix
    \[
    V=\left(\begin{array}{cccc}
        1 & 1 & \cdots & 1 \\
        \beta & \beta^{(2)} & \cdots & (\beta^{(n)})^{n-1}\\
        \vdots & \vdots &    & \vdots\\
        \beta^{n-1} & (\beta^{(2)})^{n-1} & \cdots & (\beta^{(n)})^{n-1}\\
      \end{array}\right).
    \]

    We set $(\tilde{h}_{r1},\ldots,\tilde{h}_{rn}):=\mathbf{h}_{ri}\Xi^{-1}$
    and note that
    \[
    \left\langle\mathbf{h}_{ri},\phi_i(P_i(z_i)X^{-l_r-1})\right\rangle
    =\mathbf{h}_{ri}^T\Xi^{-1}\tau(P_i(z_i)\beta^{-l_r-1})
    =\Tr\left(\sum_{k=1}^n\tilde{h}_{rk}\beta^{k-l_r-2}P_i(z_i)\right).
    \]
    Thus we may rewrite the sum under consideration as follows
    \[
    \sum_{z\in\mathcal{R}_i(\mathbf{T})}e\left(\sum_{r=1}^h\left\langle\mathbf{h}_{ri},\phi_i(P_i(z_i)X^{-l_r-1})\right\rangle\right)=
    \sum_{z\in\mathcal{R}_i(\mathbf{T})}e\left(\Tr\left(\sum_{r=1}^h
        \sum_{k=1}^n\tilde{h}_{rk}\beta^{k-l_r-2}P_i(z_i)\right)\right).
    \]

    Now we need an approximation lemma which essentially goes back to
    Siegel
    \cite{Siegel1944:generalization_waring's_problem}. Therefore let
    $\delta$ be the different of $K$ over $\Q$ and $\Delta$ be the
    absolute value of the discriminant of $K$. Then we have the following.

  \begin{lem}\label{siegel:lem}
    Let $N_1,\ldots,N_{r_1+r_2}$ be real numbers and let $N=\sqrt[n]{N_1\cdots
      N_{r_1}(N_{r_1+1}\cdots N_{r_1+r_2})^2}$ be their geometric
    mean. Suppose that $N>\Delta^{\frac1n}$, then, corresponding to
    any $\xi\in K$, there exist $q\in\mathcal{O}_K$ and
    $a\in\delta^{-1}$ such that
    \begin{gather*}
      \lvert q^{(k)}\xi^{(k)}-a^{(k)}\rvert<N_k^{-1},\quad 0<\lvert
      q^{(k)}\rvert\leq N_k,\quad 1\leq k\leq r_1+r_2,\\
      \max\left(N_k\lvert q^{(k)}\xi^{(k)}-a^{(k)}\rvert,\lvert
        q^{(k)}\rvert\right)\geq \Delta^{-\frac12},\quad 1\leq k\leq r_1+r_2,
      \intertext{and} \mathrm{N}((q,a\delta))\leq \Delta^{\frac12}.
    \end{gather*}
  \end{lem}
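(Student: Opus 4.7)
The plan is to deduce this result via Minkowski's convex body theorem applied to a carefully chosen lattice and convex symmetric body in $\R^{2n}$. I would first embed the direct sum $\mathcal{O}_K \oplus \delta^{-1}$ as a lattice $\Lambda \subset \R^{2n}$ using the standard Minkowski embedding on each factor, where each complex place contributes two real coordinates. Using the fact that the codifferent $\delta^{-1}$ is the $\Z$-dual of $\mathcal{O}_K$ with respect to the trace pairing (so that $\operatorname{disc}(\delta^{-1}) = \Delta^{-1}$), the covolume of $\Lambda$ comes out to a pure power of $2$, with the $\sqrt{\Delta}$ from $\mathcal{O}_K$ cancelling the $1/\sqrt{\Delta}$ from $\delta^{-1}$.

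Next I would define the convex symmetric region $\Omega \subset \R^{2n}$ by the inequalities $\lvert q^{(k)}\rvert \le N_k$ and $\lvert \xi^{(k)} q^{(k)} - a^{(k)}\rvert \le N_k^{-1}$ for each place $k$, with the natural interpretation on complex conjugate pairs. The key observation is that the linear substitution sending $(q,a)$ to $(q,\xi q-a)$ has unit Jacobian at each place, so the volume of $\Omega$ is independent of the $N_k$ and equals $4^{r_1}\pi^{2r_2}$. Combined with the covolume computation above, this exceeds $2^{2n}\operatorname{covol}(\Lambda)$ exactly when the hypothesis $N > \Delta^{1/n}$ is satisfied, and Minkowski's first theorem then yields a nonzero lattice point $(q,a)\in \Omega\setminus\{0\}$ delivering the two upper bound inequalities.

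For the lower bound $\max\bigl(N_k \lvert q^{(k)}\xi^{(k)}-a^{(k)}\rvert, \lvert q^{(k)}\rvert\bigr)\ge \Delta^{-1/2}$, I would run the same Minkowski argument on the region $\Omega_{(k,t)}$ in which the two bounds at the distinguished place $k$ are both scaled by a factor $t<1$; choosing $t = \Delta^{-1/2}$ makes the scaled volume drop just below the threshold needed for Minkowski's theorem, so no nonzero lattice point can have both coordinates simultaneously smaller than $\Delta^{-1/2}$ times the nominal bounds. Finally, the ideal norm bound $\mathrm{N}((q,a\delta)) \le \Delta^{1/2}$ should follow from writing the norm of the fractional ideal $(q,a\delta)=q\mathcal{O}_K + a\delta$ as a product of local contributions over all places and combining this with the inequalities already established, since the hypothesis $N > \Delta^{1/n}$ is exactly the threshold at which the arithmetic--geometric mean of these local contributions hits the discriminant.

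The main obstacle will be the precise accounting of constants through the complex embeddings, where Minkowski's embedding doubles the real dimension and introduces factors of $2$ and $\pi$, and where the passage from Minkowski's first theorem to the sharp lower bound involving $\Delta^{-1/2}$ is delicate: a clean argument may require Minkowski's second theorem on successive minima applied to the $2n$ linear forms $q \mapsto q^{(k)}$ and $(q,a)\mapsto \xi^{(k)} q^{(k)} - a^{(k)}$, whose product of absolute values has determinant $\pm 1$ by the same change-of-variables computation used for the volume. Since this is Siegel's classical 1944 result, the cleanest option in the write-up is probably to cite it directly rather than reproduce the full Minkowski bookkeeping.
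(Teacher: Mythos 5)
The paper does not prove this lemma at all: it attributes the result to Siegel's 1944 paper and states it as a citation, which is exactly what you yourself recommend in your final sentence. So on the bottom line your proposal agrees with the paper. However, the Minkowski outline you sketch, were you to carry it out, contains two concrete errors worth flagging.

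First, the role of the hypothesis $N>\Delta^{1/n}$ is misidentified. Your own volume computation shows $\operatorname{vol}(\Omega)=4^{r_1}\pi^{2r_2}$, which is independent of the $N_k$; comparing with $2^{2n}\operatorname{covol}(\Lambda)=2^{2n}\cdot 2^{-2r_2}=2^{2r_1+2r_2}$ gives a ratio of $(\pi/2)^{2r_2}\geq 1$, so the Minkowski volume threshold is met unconditionally. The hypothesis $N>\Delta^{1/n}$ does not enter here. It is needed instead to guarantee $q\neq 0$: if the nonzero lattice point produced by Minkowski had $q=0$, then $a\in\delta^{-1}\setminus\{0\}$ with $\lvert a^{(k)}\rvert<N_k^{-1}$ for all $k$, forcing $\lvert\mathrm{N}(a)\rvert<N^{-n}$; but any nonzero $a\in\delta^{-1}$ satisfies $\lvert\mathrm{N}(a)\rvert\geq\mathrm{N}(\delta^{-1})=\Delta^{-1}$, and $N>\Delta^{1/n}$ makes $N^{-n}<\Delta^{-1}$, a contradiction. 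This also explains why the two inequalities on the first display line carry the constraint $0<\lvert q^{(k)}\rvert$.

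Second, the argument you propose for the lower bound $\max\bigl(N_k\lvert q^{(k)}\xi^{(k)}-a^{(k)}\rvert,\ \lvert q^{(k)}\rvert\bigr)\geq\Delta^{-1/2}$ does not work as stated. Minkowski's first theorem gives a \emph{sufficient} condition for a lattice point to exist; a region whose volume falls below $2^{2n}\operatorname{covol}(\Lambda)$ may perfectly well still contain nonzero lattice points, so shrinking the box at one place and noting the volume drops says nothing. Moreover, the required bound must hold at \emph{every} archimedean place simultaneously, not merely at one, so even a correct non-existence argument at a single place would not suffice. Siegel's actual derivation of this lower bound, and of the ideal-norm bound $\mathrm{N}((q,a\delta))\leq\Delta^{1/2}$, requires a more careful selection of the approximant than ``any nonzero Minkowski point.'' Since the lemma is being used as a black box, citing Siegel (as the paper does) is the right call; but if you intended to sketch a genuine proof, these two steps would need to be repaired.
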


  For $1\leq r\leq h$ we set $\xi_r$ to be the leading coefficient of
  $\sum_{k=1}^n\tilde{h}_{rk}P_i(z)$. Then we apply Lemma
  \ref{siegel:lem} with $N_k=T_{i,1,k}^{d}(\log T)^{-\sigma_2}$ for $1\leq
  k\leq r_1+r_2$ in order to get that there exist $a\in\delta^{-1}$ and
  $q\in\mathcal{O}_K$ such that
  \[
  \lvert\sum_{r=1}^h\frac{\xi^{(k)}_r}{(\beta^{(k)})^{l_r+1}}\,q^{(k)}-a^{(k)}\rvert
  <\frac{(\log T)^{\sigma_2}}{T_{i,1,k}^d} \quad\text{and}\quad 0<\lvert
  q^{(k)}\rvert<\frac{T_{i,1,k}^d}{(\log T)^{\sigma_2}} \quad\text{for}\quad
  1\leq k\leq n.
  \]

\begin{lem}[{\cite[Proposition 3.2]{madritsch2010:asymptotic_normality_b}}]
\label{madr:prop3.2}
  Suppose that
  \begin{gather}\label{mani:g}
    Q(X)=\alpha_dX^d+\cdots+\alpha_1X
  \end{gather}
  is a polynomial of degree $d$ with coefficients in $K$. If for the leading
  coefficient   $\alpha_d$ there exist $a\in\delta^{-1}$ and
  $q\in\mathcal{O}_K$ as in Lemma \ref{siegel:lem} with $N_k=T_{i1k}^d(\log
  T)^{-\sigma_2}$ and
  \[
  (\log T)^{\sigma_2}\leq\lvert q^{(k)}\rvert\leq T_{i1k}^d(\log
  T)^{-\sigma_2} \quad 1\leq k\leq r_1+r_2,
  \]
  then
  \[
  \sum_{x\in \mathcal{R}_i(\mathbf{T})}e(\Tr(Q(x)))\ll T^{n_i}(\log T)^{-\sigma_0}
  \]
  with $\sigma_2\geq 2^{d-1}\left(\sigma_0+r2^{2d}\right)$.
\end{lem}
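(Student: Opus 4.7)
The plan is to prove this by the standard Weyl differencing technique adapted to the algebraic number field $K = \mathcal{K}_i$, exactly as it is carried out for number fields in the work of Siegel on Waring's problem and in Madritsch's earlier paper. The overall strategy is to reduce the sum over a polynomial of degree $d$ to a linear sum via $d-1$ applications of the Weyl--van der Corput inequality, then evaluate the resulting linear exponential sum in $\mathcal{R}_i(\mathbf{T})$ by factoring through an integral basis, and finally exploit the Diophantine information on $\alpha_d$ provided by the hypothesis (which is itself a consequence of Lemma \ref{siegel:lem}).

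First I would iterate the differencing inequality: using that $Q(x+h)-Q(x)$ has degree $d-1$ in $x$, one obtains after $d-1$ steps
\[
|S|^{2^{d-1}} \ll T^{n_i(2^{d-1}-1)} \sum_{h_1,\ldots,h_{d-1}} \Bigl|\sum_{x\in \mathcal{R}_i(\mathbf{T}')} e\bigl(\Tr(d!\,\alpha_d\, h_1\cdots h_{d-1}\, x + \text{l.o.t.})\bigr)\Bigr|,
\]
where the $h_r$ range over translates of $\mathcal{R}_i(\mathbf{T})$ of comparable size and $\mathbf{T}'$ is a shrunken version of $\mathbf{T}$. The lower-order terms depend only on $x$ linearly (not on the summation variables) so they can be absorbed.

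Next I would evaluate the innermost linear sum. Picking an integral basis $\{\omega_1,\ldots,\omega_{n_i}\}$ of $\Z[\beta]$ and writing $x=\sum_j x_j\omega_j$, the sum factors as a product of geometric series in the $x_j$, each bounded by $\min(T_{i1k},\|\Tr(\gamma\omega_j)\|^{-1})$ after the linear change of variables relating the conjugate coordinates to basis coordinates. With the choice of rational approximation $q\alpha_d \approx a$ guaranteed by Lemma \ref{siegel:lem}, setting $\gamma = d!\alpha_d h_1\cdots h_{d-1}$ the distance $\|\Tr(\gamma\omega_j)\|$ is controlled in terms of the conjugates $q^{(k)}$ and $h_r^{(k)}$. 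Summing the resulting product over $h_1,\ldots,h_{d-1}$ and separating the ``good'' differencing tuples (where the linear sum is genuinely small) from the ``bad'' ones (whose count is in turn controlled by Lemma \ref{lem:num:elements} applied to the fractional ideal denominators) produces a bound of the form $T^{n_i 2^{d-1}}(\log T)^{-2^{d-1}\sigma_0}$, after which taking the $2^{d-1}$-th root yields the claim.

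The main obstacle is the bookkeeping of the exponent $\sigma_2 \geq 2^{d-1}(\sigma_0+r 2^{2d})$. Each Weyl step doubles the exponent of $\log T$ that must be absorbed, and at the same time the geometric-series bound on the linear sum costs a polynomial factor in $(\log T)^{\sigma_2}$ for each of the $r=r_1+r_2$ inequivalent infinite places; tracking these losses through all $d-1$ differencing iterations is exactly what forces the stated relation between $\sigma_2$ and $\sigma_0$. A secondary technical point is that the region $\mathcal{R}_i(\mathbf{T})$ is defined via the conjugate embeddings rather than via basis coordinates, so the change of variables in the linear sum (encoded essentially by the Vandermonde matrix $V$ from \eqref{mani:tau}) must be carried through carefully; however, since $V$ is a fixed matrix depending only on $K$, its determinant contributes only an absolute constant and does not affect the dependence on $T$.
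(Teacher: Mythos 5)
This lemma is imported verbatim from \cite{madritsch2010:asymptotic_normality_b} as Proposition~3.2; the present paper gives no proof of it, so there is no internal argument to compare against. That said, the Weyl-differencing plan you describe --- reduce to a linear sum by $d-1$ van der Corput steps and then exploit the Siegel approximation of $\alpha_d$ from Lemma~\ref{siegel:lem} --- is the standard route for such estimates over number fields (Siegel, Gittenberger--Thuswaldner, and the cited Madritsch paper all follow it), and the relation $\sigma_2\geq 2^{d-1}(\sigma_0+r2^{2d})$ is indeed the signature of $d-1$ differencing steps with a fixed loss at each of the $r=r_1+r_2$ infinite places. Structurally your plan is the right one.

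The one step you dispose of too quickly is the evaluation of the linear sum. You assert that after choosing a $\Z$-basis $\{\omega_1,\dots,\omega_{n_i}\}$ the inner sum ``factors as a product of geometric series in the $x_j$.'' This is false as written: $\mathcal{R}_i(\mathbf{T})$ is a polydisc in the conjugate embedding, so in basis coordinates it is a skew linear image of that polydisc, not a coordinate box; the constraints on the $x_j$ are coupled and no product decomposition occurs. The remark that the Vandermonde matrix ``contributes only an absolute constant'' misses the obstruction --- the determinant is harmless, the shape of the region is what breaks the factorization. The standard repair is either to cover the region by $O(1)$ axis-parallel boxes (each contributing the product-of-$\min$ bound you want), or to work directly in conjugate coordinates with a partial-summation or Farey-dissection argument in the style of Mitsui and Gittenberger--Thuswaldner. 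The bound $\prod_k\min\bigl(T_{i1k},\lVert\Tr(\gamma\omega_k)\rVert^{-1}\bigr)$ you reach is of the correct shape, but it requires this extra covering step to be legitimate; that step is essentially the technical content of the proof in the cited reference and cannot be waved away.
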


Now we distinguish several cases according to the quality of approximation by
Lemma~\ref{siegel:lem}, which is represented by the size of $H(q)$:
\begin{itemize}
\item\textbf{Case 1.1,} $\house{q}\geq(\log T)^{\sigma_2}$: We apply
  Lemma \ref{madr:prop3.2} and get
  \[
  \sum_{z_i\in\mathcal{R}_i(\mathbf{T})}
  E\left(\sum_{r=1}^h\sum_{k=1}^{n}\frac{\tilde{h}_{rk}P_i(z_i))}{\beta^{l_r+1}}\right)
  \ll T^n(\log T)^{-\sigma_0}.
  \]

\item\textbf{Case 1.2,} $2\leq\house{q}<(\log T)^{\sigma_2}$: In the last two
  cases we need Minkowski's lattice theory
  (\textit{cf}. \cite{hua1982:introduction_to_number}). Let $\lambda_1$ be the
  first successive minimum of the $\Z$-lattice $\delta^{-1}$. Then we get
  \begin{gather*}
  \lvert\sum_{r=1}^h\frac{\xi^{(k)}_r}{(\beta^{(k)})^{l_r+1}}\rvert
  \geq\lvert\frac{a^{(k)}}{q^{(k)}}\rvert-\frac1{\lvert q^{(k)}\rvert^2}
  \geq\lambda_1\left(\frac1{\lvert q^{(k)}\rvert}-\frac1{\lvert
      q^{(k)}\rvert^2}\right)
  \geq\lambda_1\frac1{2\lvert q^{(k)}\rvert}
  \gg(\log T)^{-\sigma_2},
  \end{gather*}
  which implies
  \begin{gather*}
    (\log T)^{\sigma_2}
    \ll\lvert\sum_{r=1}^h\frac{\xi^{(k)}_r}{(\beta^{(k)})^{l_r+1}}\rvert
    \leq\frac{\sum_{r=1}^h\lvert\xi^{(k)}_r\rvert}{\lvert \beta^{(k)}\rvert^{l_1+1}}.
  \end{gather*}

  Since $\lvert \beta^{(k)}\rvert>1$ and (\ref{mani:weyl:h_bounds}) we have
\[
  \lvert \beta^{(k)}\rvert^{l_1+1}\ll\lvert\xi^{(k)}\rvert(\log T)^{\sigma_2}
  \ll n(\log T)^{\sigma_2+\sigma_1},
\]
which yields
\[
  l_1\ll\frac{(\sigma_2+\sigma_1)}{\log\lvert \beta^{(k)}\rvert}\log\log T
\]
contradicting the lower bound of $l_1$ for sufficiently large $C_1$ in
(\ref{mani:weyl:l_bounds}).
\item\textbf{Case 1.3,} $0<\house{q}<2$: In this case we will again use
  Minkowski's lattice theory
  (\textit{cf}. \cite{hua1982:introduction_to_number}). Let $\lambda_1$ be the
  first successive minimum of the $\Z$-lattice $\delta^{-1}$, then we have to
  consider two subcases
  \begin{itemize}
  \item\textbf{Case 1.3.1,}
    $\house{\sum_{r=1}^h\frac{\xi_r}{\beta^{l_r+1}}q}
    \geq\frac{\lambda_1}{2}$: Let $1\leq k\leq n$ be such that
    \[
    \frac{\lambda_1}{2}\leq
    \lvert\sum_{r=1}^h\frac{\xi^{(k)}_r}{(\beta^{(k)})^{l_r+1}}q^{(k)}\rvert\leq
    \frac{\sum_{r=1}^h\lvert\xi^{(k)}_r\rvert}{\lvert
      \beta^{(k)}\rvert^{l_1+1}}\lvert q^{(k)}\rvert,
    \]
    then
    \[
    l_1+1\ll \log\log T
    \]
    again contradicts the lower bound of $l_1$ for sufficiently large
    $C_1$ in (\ref{mani:weyl:l_bounds}).
  \item\textbf{Case 1.3.2,}
    $\house{\sum_{r=1}^h\frac{\xi_r}{\beta^{l_r+1}}q}
    <\frac{\lambda_1}{2}$: By Minkowski's theorem (\textit{cf.}
    \cite{hua1982:introduction_to_number}) we get that $a=0$. Thus for $1\leq k\leq n$
    \[
    \lvert\sum_{r=1}^h\frac{\xi^{(k)}_r}{(\beta^{(k)})^{l_r+1}}q^{(k)}\rvert=
    \lvert\frac{1}{(\beta^{(k)})^{l_r+1}}\sum_{r=1}^h\xi^{(k)}_r(\beta^{(k)})^{l_h-l_r}
    q^{(k)}\rvert\leq\frac{(\log T)^{\sigma_2}}{T_{i1k}^d}
    \]
    which implies (taking the norm of the left side)
    \[
    l_h+1\geq nd\log_{\lvert\mathrm{N}(\beta)\rvert}
    T-c(\log\log_{\lvert\mathrm{N}(\beta)\rvert}T)
    \]
    contradicting the upper bound for sufficiently large $C_2$.
  \end{itemize}
\end{itemize}

\item\textbf{Case 2,} $m_i>1$: Now we have to go one step further and
  to take a closer look at $\mathcal{R}_i$. In particular we divide every
  element $z_i\in\mathcal{R}_i$ into its radical and its nilpotent part. We
  fix an element $z\in\mathcal{R}$ and set $z_i:=\pi_i(z)$.

  On the one hand, since $\mathcal{R}_i=\tilde{\mathcal{R}_i}\oplus\mathcal{N}_i$ we have
  for $z_i\in\mathcal{R}_i$ the unique representation
  \begin{gather}\label{mani:zi}
  z_i=z_{i1}+z_{i2}
  \end{gather}
  with $z_{i1}\in\tilde{\mathcal{R}_i}$ and $z_{i2}\in\mathcal{N}_i$. This motivates the definition of the
  linear map $\pi_{ij}$ such that $\pi_{ij}(z):=z_{ij}$ for $i=1,\ldots,t$
  and $j=1,2$.

  On the other hand, since $\mathcal{R}_i\cong(\Z[X]/(p_i))^{m_i}$ we have for
  every $z_{i}\in\mathcal{R}_i$ the unique representation
  \begin{align*}
    z_i=\sum_{j=1}^{m_i}a_{ij}p_i^{j-1}=\sum_{j=1}^{m_i}\sum_{k=1}^{n_i}a_{ijk}X^{k-1}p_i^{j-1}
  \end{align*}
  with $a_{ij}\in\Z[X]$ and $a_{ijk}\in\Z$, respectively. We clearly have
  \[
    \pi_{i1}(z)=\sum_{k=1}^{n_i}a_{i1k}X^{k-1}\quad\text{and}\quad
    \pi_{i2}(z)=\sum_{j=2}^{m_i}\sum_{k=1}^{n_i}a_{ijk}X^{k-1}p_i^{j-1}.
  \]

  Thus we define for $z_i\in\mathcal{R}_i$ the embeddings $\psi_{i1}$
  and $\psi_{i2}$ by
  \begin{align*}
  \psi_{i1}(\pi_{i1}(z_i))=(a_{i11},\ldots,a_{i1n_i})\quad\text{and}\quad
  \psi_{i2}(\pi_{i2}(z_i))=(a_{i21},\ldots,a_{i,m_i,n_i}).
  \end{align*}
  Then there exists an invertible matrix $\tilde{M}_i$ such that
  \[
  \tilde{M}_i(\phi_i\circ\pi_i(z))=(\psi_{i1}\circ\pi_{i1}(z),\psi_{i2}\circ\pi_{i2}(z)).
  \]

  Now we can divide the sum up as follows.
  \begin{align*}
    &\sum_{z_i\in\mathcal{R}_i(\mathbf{T})}e\left(\sum_{r=1}^h\left\langle\mathbf{h}_{ri},\phi_i(P_i(z_i)X^{-l_r-1})\right\rangle\right)\\
    &\quad=\sum_{z_{i1}\in\tilde{\mathcal{R}_i}(\mathbf{T})}\sum_{z_{i2}\in\mathcal{N}_{i}(\mathbf{T})}
    e\left(\sum_{r=1}^h\left\langle\mathbf{h}_{ri},\phi_{i}(P_i(z_{i1}+z_{i2})X^{-l_r-1})\right\rangle\right)\\
    &\quad=\sum_{z_{i1}\in\tilde{\mathcal{R}_i}(\mathbf{T})}e\left(\sum_{r=1}^h\left\langle\mathbf{h}_{ri1},
        \psi_{i1}(P_{i1}(z_{i1})X^{-l_r-1})\right\rangle\right)\sum_{z_{i2}\in\mathcal{N}_{i}(\mathbf{T})}
e\left(\sum_{r=1}^h\left\langle\mathbf{h}_{ri2},\psi_{i2}(P_{i2}(z_{i2})X^{-l_r-1})\right\rangle\right),
  \end{align*}
  where we have set $P_{ij}=\pi_{ij}\circ P$ for $j=1,2$.

  Since for the first sum we have that $m_i=1$ we may follow \textbf{Case 1} above and use Lemma
  \ref{madritsch:lem3.3} for trivially estimating the second one to prove the
  proposition for this case.
\end{itemize}

\end{proof}

\section{Treatment of the border}\label{sec:treatment-border}
In Section \ref{sec:number:system} above we have constructed the Urysohn
function we need in order to properly count the number of elements
within the fundamental domain. In this construction we also used an
axe-parallel tube in order to cover the border of the fundamental
domain. The number of hits of this tube gives rise to the error term
which we will consider in this section.

We fix a positive integer $v$, which will be chosen later, and a real vector
$\mathbf{T}$. Furthermore for $l\geq0$ we define $F_l$ to be the number of hits
of the border of the Urysohn function which is
\begin{gather}\label{mani:Fl}
  F_l:=\#\left\{z\in {\mathcal R}(\mathbf{T})\middle\vert
    B^{-l-1}\phi\left(P(z)\right)
    \in\bigcup_{a\in\mathcal{N}}P_{v,a}\mod B^{-1}\Z^n\right\}.
\end{gather}

As indicated above we are interested in an estimation of $F_l$.
\begin{prop}\label{mani:prop:border}
  Let $\mu<\lvert\det B\rvert$ be as in Section
  \ref{sec:number:system} and $C_1$ and $C_2$ be sufficiently large
  positive reals. Suppose that $l$ is a positive integer such that
  \begin{gather}\label{mani:j_bounds}
    C_1\log\log T\leq l\leq d\log_{\lvert\det B\rvert}T-C_2\log\log T.
  \end{gather}
  Then for any positive $\sigma_3$ we have
  \[
  F_l\ll\mu^vT^n\left(\lvert\det B\rvert^{-v}+(\log
    T)^{-t\sigma_3}\right).
  \]
\end{prop}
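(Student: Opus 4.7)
The plan is to reduce the counting of tube-hits to an application of Proposition~\ref{mani:prop:weyl} via a Fourier expansion of a smoothed indicator of the tube $\bigcup_{a \in \mathcal{N}} P_{v,a}$.

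First, I would construct a smooth $\mathbb{Z}^n$-periodic majorant $\chi$ of $\mathbf{1}_{\bigcup_a P_{v,a}}$ by exactly the convolution construction of~\eqref{mani:urysohn}, taking $\psi$ to be the sharp indicator of the tube and smoothing by convolution with a box of side $\kappa = 2 c_u \lvert\det B\rvert^{-v}$. Then $F_l \le \sum_{z \in \mathcal{R}(\mathbf{T})} \chi(B^{-l-1}\phi(P(z)))$. Exactly as in the computation leading to Lemma~\ref{gt:lem3.2}, using the $\mathcal{O}(\mu^v)$ rectangular decomposition of the tube from Lemma~\ref{gt:lem3.1}, the Fourier coefficients of $\chi$ would satisfy
\[
\hat{\chi}(\mathbf{0}) \ll \mu^v \lvert\det B\rvert^{-v}, \qquad \hat{\chi}(\mathbf{m}) \ll \mu^v \prod_{i=1}^n \frac{1}{r(m_i)} \quad (\mathbf{m} \ne \mathbf{0}),
\]
with $r(m_i)$ as in Lemma~\ref{gt:lem3.2}.

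The zero mode then contributes $\hat{\chi}(\mathbf{0}) \cdot \#\mathcal{R}(\mathbf{T}) \ll \mu^v T^n \lvert\det B\rvert^{-v}$ via the bound $\#\mathcal{R}(\mathbf{T}) \ll T^n$ supplied by Lemma~\ref{lem:num:elements}; this is exactly the first summand of the claimed estimate. For the nonzero modes I would truncate at a height $H := (\log T)^{\sigma_1}$. On the tail $\lVert\mathbf{m}\rVert_\infty > H$, pairing the Fourier decay against the trivial bound $\lvert\sum_z e(\cdots)\rvert \le T^n$ yields a contribution which, for $\sigma_1$ chosen large relative to $\sigma_3$ and $n$, is absorbed into $\mu^v T^n (\log T)^{-t\sigma_3}$. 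On the truncated range $0 < \lVert\mathbf{m}\rVert_\infty \le H$, the hypothesis~\eqref{mani:weyl:h_bounds} of Proposition~\ref{mani:prop:weyl} is met, so applying it with a single frequency ($h = 1$, $l_1 = l$, $\mathbf{h}_1 = \mathbf{m}$) yields $\bigl\lvert\sum_{z}e(\langle \mathbf{m}, B^{-l-1}\phi(P(z))\rangle)\bigr\rvert \ll T^n (\log T)^{-t\sigma_0}$. Summing this against the Fourier coefficients over the truncated range (whose $\ell^1$ mass is $\mathcal{O}(\mu^v (\log H)^n)$ by the coefficient bound) then gives a total contribution $\mu^v T^n (\log T)^{-t\sigma_3}$, provided $\sigma_0$ is chosen to exceed $\sigma_3$ by a margin that absorbs the $(\log\log T)^n$ factor.

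It remains to verify that~\eqref{mani:j_bounds} forces~\eqref{mani:weyl:l_bounds}: since $L = \lfloor \log_{p(0)} T^d \rfloor = \mathcal{O}(\log T)$, we have $\log L = \mathcal{O}(\log\log T)$, so $l_1 = l_h = l$ satisfies the required two-sided bound once $C_1, C_2$ in~\eqref{mani:j_bounds} are taken large enough in terms of the corresponding constants of Proposition~\ref{mani:prop:weyl}. The main obstacle is the parameter chain: the smoothing scale $\kappa \sim \lvert\det B\rvert^{-v}$ must be compatible with the Fourier decay used to absorb the tail, the truncation height $H$ must lie within the window~\eqref{mani:weyl:h_bounds}, and the exponents $\sigma_0, \sigma_1, \sigma_2, \sigma_3$ have to be chained so that the final bound takes the advertised form. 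None of the individual steps is conceptually new beyond what appears in the analogous border-treatment arguments of Gittenberger--Thuswaldner and Madritsch for the Gaussian-integer and number-field cases, but careful bookkeeping is required to keep all parameters consistent.
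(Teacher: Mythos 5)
Your approach is genuinely different from the paper's, although both routes rest on the same three ingredients: the rectangular decomposition of the tube from Lemma~\ref{gt:lem3.1}, the cardinality estimate $\#\mathcal{R}(\mathbf{T})\asymp T^n$ from Lemma~\ref{lem:num:elements}, and the Weyl-sum bound of Proposition~\ref{mani:prop:weyl}. The paper does not build a majorant of the tube at all. Instead, for each rectangle $R_a$ of the decomposition it compares $F_l(R_a)$ with $T^n\lambda(R_a)$ plus the discrepancy of the sequence $\{B^{-l-1}\phi(P(z))\}_{z\in\mathcal{R}(\mathbf{T})}$, invokes the Erd\H{o}s--Tur\'an--Koksma inequality (Lemma~\ref{etk:inequ}) with truncation height $H$, feeds in Proposition~\ref{mani:prop:weyl} for $\lVert\mathbf{h}\rVert_\infty\le H$ (checking, as you do, that \eqref{mani:j_bounds} implies \eqref{mani:weyl:l_bounds}), and finally sums over the $\mathcal{O}(\mu^v)$ rectangles. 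Your alternative---a single smoothed majorant of the whole tube with a hand-rolled Fourier analysis---is a valid template, but it trades the clean $2/(H+1)$ term built into Erd\H{o}s--Tur\'an--Koksma for an explicit tail estimate that you must supply yourself, and there the write-up breaks down as stated.

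Concretely, you quote the Fourier bound $\hat\chi(\mathbf{m})\ll\mu^v\prod_i 1/r(m_i)$ ``with $r(m_i)$ as in Lemma~\ref{gt:lem3.2}'', i.e.\ $r(m_i)=\kappa m_i$ for $m_i\neq0$. With only that bound, $\sum_{\lVert\mathbf{m}\rVert_\infty>H}\prod_i 1/r(m_i)$ diverges---already in one coordinate $\sum_{|m|>H}1/(\kappa m)$ does---so pairing it against the trivial Weyl bound $T^n$ cannot control the tail. What you actually need, and what the convolution construction does produce, is the quadratic decay $\hat\chi(\mathbf{m})\ll\mu^v\prod_i\min(s_i,1/|m_i|)\min\!\bigl(1,1/(\kappa|m_i|)\bigr)$: the second factor is the Fourier transform of the box you convolved with, and it is what makes the tail summable, giving a tail contribution $\ll\mu^v T^n/(\kappa H)$. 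Relatedly, the $\ell^1$ mass over the truncated range is not $\mathcal{O}(\mu^v(\log H)^n)$; from the cited $1/(\kappa m_i)$ bound one only gets $\mathcal{O}(\mu^v(\kappa^{-1}\log H)^n)$, and the extra factor $\kappa^{-n}\asymp\lvert\det B\rvert^{nv}$ has to be absorbed into the choice of $\sigma_0$ in Proposition~\ref{mani:prop:weyl}. This is feasible because $v\ll\log\log T$ in the eventual application (Proposition~\ref{mani:centralprop}), but it is exactly the parameter dependence you flag as delicate, so it cannot be waved through. In short: the strategy is sound and genuinely distinct from the paper's Erd\H{o}s--Tur\'an--Koksma route, but the coefficient bound you cite justifies neither your tail estimate nor your stated $\ell^1$ bound---you need the stronger decay that the box-smoothing actually gives, and you need to track the resulting $\kappa^{-n}$ explicitly.
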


In order to estimate $F_l$ we need the Erd\H os-Tur\'an-Koksma
Inequality.
\begin{lem}[{\cite[Theorem 1.21]{mr1470456}}]\label{etk:inequ}
  Let $x_1,\ldots,x_S$ be points in the $n$-dimensional real vector
  space $\R^n$ and $H$ an arbitrary positive integer. Then the
  discrepancy $D_S(x_1,\ldots,x_S)$ fulfills the inequality
  \[
  D_S(x_1,\ldots,x_S)\ll\frac2{H+1}+
  \sum_{0<\lVert\mathbf{h}\rVert_\infty\leq H}\frac1{r(\mathbf{h})}
  \lvert\frac1S\sum_{s=1}^Se(\langle\mathbf{h},x_s\rangle)\rvert,
  \]
  where $\mathbf{h}\in\Z^n$ and
  $r(\mathbf{h})=\prod_{i=1}^n\max(1,\lvert h_i\rvert)$.
\end{lem}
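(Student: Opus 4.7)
The plan is to reduce the counting problem defining $F_l$ to a discrepancy estimate for the point set $\{B^{-l-1}\phi(P(z))\bmod B^{-1}\Z^n : z\in\mathcal{R}(\mathbf{T})\}$ in the fundamental domain of $B^{-1}\Z^n$, and then handle the main term geometrically while controlling the discrepancy error by the Weyl-type estimate of Proposition~\ref{mani:prop:weyl}.

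First I would use Lemma~\ref{gt:lem3.1}: the tube $P_{v,a}$ is covered by $\mathcal{O}(\mu^v)$ axe-parallel rectangles $R_j^{(a)}$ each of $n$-dimensional Lebesgue measure $\mathcal{O}(\lvert\det B\rvert^{-v})$. Summing over the finitely many $a\in\mathcal{N}$, it suffices to estimate, for each such rectangle $R$,
\[
N(R):=\#\bigl\{z\in\mathcal{R}(\mathbf{T}):B^{-l-1}\phi(P(z))\in R\bmod B^{-1}\Z^n\bigr\}.
\]
After the linear change of variables $w=B(B^{-l-1}\phi(P(z)))=B^{-l}\phi(P(z))$ (so the target domain becomes the unit cube modulo $\Z^n$), $R$ transforms into a box $R'\subset[0,1)^n$ of Lebesgue measure still $\mathcal{O}(\lvert\det B\rvert^{-v})$. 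Thus $N(R)$ measures how many of the $S:=\#\mathcal{R}(\mathbf{T})\asymp T^n$ points $x_z:=B^{-l}\phi(P(z))\bmod\Z^n$ fall into $R'$.

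Next I would apply the Erd\H os--Tur\'an--Koksma inequality (Lemma~\ref{etk:inequ}) to the point set $\{x_z\}_{z\in\mathcal{R}(\mathbf{T})}$. By the standard splitting of an indicator of a box in terms of discrepancy, one obtains
\[
N(R)=\lambda(R')\,S+\mathcal{O}\bigl(S\cdot D_S\bigr),
\]
and Lemma~\ref{etk:inequ} with parameter $H=\lfloor(\log T)^{\sigma_1}\rfloor$ gives
\[
D_S\ll\frac1H+\sum_{0<\lVert\mathbf{h}\rVert_\infty\leq H}\frac1{r(\mathbf{h})}\Bigl\lvert\frac1S\sum_{z\in\mathcal{R}(\mathbf{T})}e\bigl(\langle\mathbf{h},B^{-l}\phi(P(z))\rangle\bigr)\Bigr\rvert.
\]
The inner exponential sum is exactly of the form treated in Proposition~\ref{mani:prop:weyl} with $h=1$, $l_1=l-1$, and $\mathbf{h}_1=\mathbf{h}$ (after reabsorbing the constant matrix $B$ into $\mathbf{h}_1$, which only costs a bounded factor in the $\ell^\infty$ norm). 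The hypothesis (\ref{mani:j_bounds}) is precisely what is needed to satisfy the assumption (\ref{mani:weyl:l_bounds}) of Proposition~\ref{mani:prop:weyl}, and the bound $\lVert\mathbf{h}\rVert_\infty\leq H\leq(\log T)^{\sigma_1}$ matches (\ref{mani:weyl:h_bounds}). Choosing $\sigma_1$ large enough relative to $\sigma_3$ so that Proposition~\ref{mani:prop:weyl} yields a saving $(\log T)^{-t\sigma_0}$ with $\sigma_0$ comfortably exceeding $\sigma_3$, the Weyl bound combined with the standard estimate $\sum_{0<\lVert\mathbf{h}\rVert_\infty\leq H}r(\mathbf{h})^{-1}\ll(\log H)^n\ll(\log\log T)^n$ absorbs the harmonic sum and gives $D_S\ll(\log T)^{-t\sigma_3}$.

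Combining the two pieces, for each single rectangle we have $N(R)\ll T^n\bigl(\lvert\det B\rvert^{-v}+(\log T)^{-t\sigma_3}\bigr)$; summing over the $\mathcal{O}(\mu^v)$ rectangles covering $\bigcup_{a\in\mathcal{N}}P_{v,a}$ produces the claimed bound
\[
F_l\ll\mu^v T^n\bigl(\lvert\det B\rvert^{-v}+(\log T)^{-t\sigma_3}\bigr).
\]
I expect the main obstacle to be of a bookkeeping nature rather than conceptual: one must be careful that after the $B^{-l}$ twist the frequency vector used in Erd\H os--Tur\'an--Koksma is still in $\Z^n$ with $\ell^\infty$-norm below $(\log T)^{\sigma_1}$ (so Proposition~\ref{mani:prop:weyl} applies uniformly), and that the $\sigma_0$ obtained from Proposition~\ref{mani:prop:weyl} genuinely dominates $\sigma_3$ after one pays the $\log H$ factors and the bounded multiplicative overhead from converting $B^{-1}\Z^n$-periodicity to $\Z^n$-periodicity.
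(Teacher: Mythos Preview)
Your proposal does not address the stated lemma at all. The statement in question is the Erd\H os--Tur\'an--Koksma inequality itself, which the paper does not prove: it is quoted verbatim as Theorem~1.21 of Drmota--Tichy and used as a black box. There is therefore no ``paper's own proof'' to compare against, and a correct response here would simply be to note that this is a standard cited result.

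What you have written is instead a proof sketch of Proposition~\ref{mani:prop:border} (the estimate $F_l\ll\mu^v T^n(\lvert\det B\rvert^{-v}+(\log T)^{-t\sigma_3})$), in which Lemma~\ref{etk:inequ} appears only as one ingredient. For what it is worth, that sketch follows the paper's argument for Proposition~\ref{mani:prop:border} essentially line for line: decompose the tube into $\mathcal{O}(\mu^v)$ rectangles via Lemma~\ref{gt:lem3.1}, bound the count in each rectangle by its measure plus the discrepancy, invoke Lemma~\ref{etk:inequ} with $H$ a power of $\log T$, estimate the resulting exponential sums by Proposition~\ref{mani:prop:weyl}, absorb the $(\log H)^n\ll(\log\log T)^n$ harmonic factor, and sum over rectangles. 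The paper carries out exactly these steps (it sets $\sigma_1=t\sigma_0/2$ and then $\sigma_3=t\sigma_0/2$), so your approach to \emph{that} proposition is the same as the authors'. But as a proof of Lemma~\ref{etk:inequ} it is a category error.
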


\begin{proof}[Proof of Proposition \ref{mani:prop:border}]
  We want to proceed in three steps. First we subdivide the tube
  $P_{v,a}$ into rectangles in order to apply the Erd\H
  os-Tur\'an-Koksma Inequality in the second step. Finally we put them
  together to gain the desired result.

  Recall that the tube $P_{v,a}$ defined in Lemma
  \ref{gt:lem3.1} consists of a family of rectangles. Let $R_a$ be one
  of them, then we want to estimate
  \[
  F_l(R_a):=\#\left\{z\in {\mathcal R}(\mathbf{T})\middle\vert
    B^{-l-1}\phi\left(P(z)\right) \in\bigcup_{a\in\mathcal{N}}R_a\mod
    B^{-1}\Z^n\right\}.
  \]

  Using the definition of the discrepancy we get that
  \begin{gather}\label{mani:border0}
    F_l(R_a)\ll T^n\left(\lambda(R_a)
      +D_S\left(\left\{B^{-l-1}\phi\left(P(z)\right)\right\}_{z\in\mathcal{R}(\mathbf{T})}\right)\right),
  \end{gather}
  where $\lambda$ is the $n$-dimensional Lebesgue measure and $S$ is the number
  of elements in $\mathcal{R}(\mathbf{T})$. By Lemma \ref{lem:num:elements} we have that
  \begin{gather}\label{mani:border1}
    S=\prod_{i=1}^tc_i\left(\prod_{k=1}^{n_i}T_{i1k}\right)^{m_i}+\mathcal{O}\left(T_0^{n-1}\right).
  \end{gather}

  Now we apply Lemma \ref{etk:inequ} to get
  \begin{gather}\label{mani:border2}
    D_S\left(\left\{B^{-l-1}\phi\left(P(z)\right)\right\}_{z\in\mathcal{R}(\mathbf{T})}\right)
    \ll\frac2{H+1}+\sum_{0<\lVert\mathbf{h}\rVert_\infty\leq H}
    \frac1{r(\mathbf{h})}\lvert\frac1S\sum_{z\in\mathcal{R}(\mathbf{T})}
    e\left(\langle\mathbf{h},B^{-l-1}\phi\left(P(z)\right)\rangle\right)\rvert.
  \end{gather}

  The next step consists in an application of Proposition
  \ref{mani:prop:weyl} which yields
  \begin{gather}\label{mani:border3}
  \lvert\sum_{z\in\mathcal{R}(\mathbf{T})}
    e\left(\langle\mathbf{h},B^{-l-1}\phi\left(P(z)\right)\rangle\right)\rvert
  \ll T^n(\log T)^{-t\sigma_0}
  \end{gather}

  Putting (\ref{mani:border1}), (\ref{mani:border2}), and
  (\ref{mani:border3}) togetter in (\ref{mani:border0}) gives
\begin{align*}
F_l(R_a)
&\ll T^n\lambda(R_a)+\frac{T^n}{(\log T)^{\sigma_1}}+T^n(\log
T)^{-t\sigma_0}\sum_{0<\lVert\mathbf{h}\rVert_\infty\leq H}\frac1{r(\mathbf{h})}\\
&\ll T^n\lambda(R_a)+\frac{T^n}{(\log T)^{\sigma_1}}+T^n(\log
  T)^{-t\sigma_0}(\log\log T)^n.
\end{align*}
Setting $\sigma_1:=t\sigma_0/2$ and summing over all rectangles
$R_a$ yields
\[
  F_l\ll\mu^vT^n\left(\lvert\mathrm{N}(b)\rvert^{-v}+(\log T)^{-t\sigma_0/2}\right).
\]
Finally we set $\sigma_3=t\sigma_0/2$ which proves the proposition.
\end{proof}

\section{The main proposition}\label{sec:main-proposition}
The main idea is to understand the additive function as putting weights on the
digits. Thus if we can show that the digits are uniformly distributed the same
is true for the values of the additive functions. Therefore we look at patterns
in the expansion of $P(z)$. In particular, we count the number of occurrences
of certain digits at certain positions in the expansions.

\begin{prop}\label{mani:centralprop}
  Let $f$ be an additive function. Let $T\geq0$ and $T_{ijk}$ as in (\ref{mani:T_ikj}). Let $L$ be the maximum
  length of the $b$-adic expansion of $z\in {\mathcal R}(\mathbf{T})$ and let $C_1$ and
  $C_2$ be sufficiently large. Then for
  \begin{gather}\label{mani:l_bounds}
    C_1\log L\leq l_1< l_2<\cdots<l_h\leq dL-C_2\log L
  \end{gather}
  we have,
  \begin{align*}
  \Theta&:=\#\left\{z\in {\mathcal R}(\mathbf{T})\middle \vert
    a_{l_r}(f(z))=b_r,r=1,\ldots,h\right\}\\
  &=\frac{c_1\cdots c_t}{\lvert\det B\rvert^h}T^n+\mathcal{O}\left(T^n(\log T)^{-t\sigma_0}\right).
  \end{align*}
  uniformly for $T\to\infty$, where $(l_r,b_r)\in\N\times\mathcal{N}$ are given
  pairs of position and digit and $\sigma_0$ is an arbitrary positive constant.
\end{prop}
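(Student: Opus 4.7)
The strategy is to realize the digit condition $a_{l_r}(\lfloor P(z)\rfloor)=b_r$ geometrically and then reduce the count to the Fourier expansion of the Urysohn function from Section~\ref{sec:number:system}. Under the embedding $\phi$, extracting the $l$-th digit amounts to reading the first digit of $\{X^{-l-1}P(z)\}$, and by construction this first digit equals $b$ precisely when $B^{-l-1}\phi(P(z))\bmod\Z^n\in\mathcal{F}_b$. Consequently
\[
\Theta=\sum_{z\in\mathcal{R}(\mathbf{T})}\prod_{r=1}^h\mathbf{1}_{\mathcal{F}_{b_r}}\!\bigl(B^{-l_r-1}\phi(P(z))\bmod\Z^n\bigr).
\]
First I would replace each indicator $\mathbf{1}_{\mathcal{F}_{b_r}}$ by the Urysohn function $u_{b_r}$ of (\ref{mani:urysohn}). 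Since $u_{b_r}$ and $\mathbf{1}_{\mathcal{F}_{b_r}}$ coincide off the tube $P_{v,b_r}$, the committed error for each factor is bounded by $F_{l_r}$ of (\ref{mani:Fl}), so by a standard telescoping the total replacement error is $\ll\sum_{r=1}^h F_{l_r}$, which by Proposition~\ref{mani:prop:border} is $\mathcal{O}\!\bigl(h\mu^v T^n(|\det B|^{-v}+(\log T)^{-t\sigma_3})\bigr)$.

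Next I would expand each $u_{b_r}$ into its Fourier series via Lemma~\ref{gt:lem3.2}, truncating $\|\mathbf{h}_r\|_\infty\leq H:=(\log T)^{\sigma_1}$; the tails contribute $\mathcal{O}(T^n/H)$ by standard tail estimates on the Fourier coefficients. After multiplying the $h$ truncated series and interchanging summations, the contribution of the tuple $(\mathbf{h}_1,\ldots,\mathbf{h}_h)=(\mathbf{0},\ldots,\mathbf{0})$ is
\[
\Bigl(\tfrac{1}{|\det B|}\Bigr)^h\cdot\#\mathcal{R}(\mathbf{T})=\frac{c_1\cdots c_t}{|\det B|^h}T^n+\mathcal{O}\!\bigl(T^{n-1}\bigr),
\]
by Lemma~\ref{lem:num:elements}, which supplies the main term. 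Every other tuple yields a character sum
\[
\Bigl(\prod_{r=1}^h c_{\mathbf{h}_r}\Bigr)\sum_{z\in\mathcal{R}(\mathbf{T})}e\!\Bigl(\sum_{r=1}^h\langle\mathbf{h}_r,B^{-l_r-1}\phi(P(z))\rangle\Bigr),
\]
where the inner exponential sum is exactly the one controlled by Proposition~\ref{mani:prop:weyl} (applicable thanks to our hypotheses (\ref{mani:l_bounds}) on the $l_r$ and the truncation on $\|\mathbf{h}_r\|_\infty$), giving $\mathcal{O}(T^n(\log T)^{-t\sigma_0})$.

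Finally I would combine the bounds: the coefficient estimate $c_{\mathbf{h}_r}\ll\mu^v\prod_i 1/r(h_{r,i})$ makes $\prod_r\sum_{\mathbf{h}_r\neq\mathbf{0}}|c_{\mathbf{h}_r}|\ll(\mu^v\log H)^{hn}$, which is polylogarithmic in $T$ once $v$ is taken of size $\asymp\log\log T$. Balancing: choose $v$ so that $\mu^v|\det B|^{-v}\ll(\log T)^{-t\sigma_3}$ (possible since $\mu<|\det B|$), take $H=(\log T)^{\sigma_1}$ with $\sigma_1$ large, and pick $\sigma_0$ and $\sigma_3$ large compared to the polylogarithmic loss coming from $(\mu^v\log H)^{hn}$. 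All three error contributions — the border replacement, the Fourier tail, and the non-trivial character sums — then absorb into $\mathcal{O}(T^n(\log T)^{-t\sigma_0'})$ for any prescribed positive $\sigma_0'$, which after relabelling yields the claimed estimate. The main obstacle I anticipate is the bookkeeping when multiplying the $h$ Fourier expansions: the product of $h$ Fourier coefficient bounds gives a factor $\mu^{vh}$, and one has to verify that the choice of $v$ still leaves enough room for Proposition~\ref{mani:prop:weyl} to beat this factor; this is why $v$ must be taken only of logarithmic size while $H$ grows polylogarithmically, and why $\sigma_0$ in Proposition~\ref{mani:prop:weyl} is chosen sufficiently large with respect to both $h$ and the exponents coming from Lemma~\ref{gt:lem3.2}.
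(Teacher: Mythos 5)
Your proposal is correct and follows essentially the same route as the paper: multiply Urysohn functions $u_{b_r}$, expand via Lemma~\ref{gt:lem3.2}, extract the main term from the zero tuple via Lemma~\ref{lem:num:elements}, kill the non-trivial tuples with Proposition~\ref{mani:prop:weyl}, and absorb the boundary error through $F_{l_r}$ and Proposition~\ref{mani:prop:border} with $v\asymp\log\log T$. The only visible difference is that you make the truncation $\lVert\mathbf{h}_r\rVert_\infty\le(\log T)^{\sigma_1}$ explicit before invoking Proposition~\ref{mani:prop:weyl} and account for the tail separately, a step the paper compresses into the single bound $\sum_{M}\lvert T_M\rvert\ll\kappa^{-2h}$; your version is arguably the more careful bookkeeping, but the substance is identical.
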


\begin{proof}
  We recall our Urysohn function $u_a$ (defined in
  (\ref{mani:urysohn})) and set for $\mathbf{\nu}\in\R^n$
  \[
  t(\mathbf{\nu})=u_{b_1}(B^{-l_1-1}\mathbf{\nu})\cdots u_{b_h}(B^{-l_h-1}\mathbf{\nu}),
  \]
  where $B$ is the matrix defined in \eqref{mani:B}.

  Now we want to apply the Fourier transformation, which we developed
  in Lemma \ref{gt:lem3.2}. Therefore we set
  \[
  \mathcal{M}:=\left\{M=(h_1,\ldots,h_h)\middle\vert
    h_r\in\Z^n\text{, for  }r=1,\ldots,h\right\}.
  \]
  An application of Lemma \ref{gt:lem3.2} yields
  \begin{gather}\label{mani:2}
  t(\nu)=\sum_{M\in\mathcal{M}}T_Me\left(\sum_{r=1}^h\mathbf{h}_rB^{-l_r-1}\nu\right),
  \end{gather}
  where $T_M=\prod_{r=1}^hc_{m_{r1},\ldots,m_{rn}}$. Combining this with the
  definition of $F_l$ in (\ref{mani:Fl}) we get
  \begin{gather}\label{mani:3}
    \lvert\Theta-\sum_{z\in\mathcal{R}(\mathbf{T})}t(\phi(P(z)))\rvert \leq
    F_{l_1}+\cdots+F_{l_h}.
  \end{gather}

  Plugging (\ref{mani:2}) into (\ref{mani:3}) together with an application of
  Lemma \ref{gt:lem3.2} for the coefficients yields
  \[
  \Theta=
  \frac{c_1\cdots c_t}{\lvert\det(B)\rvert^h}T^n+
  \sum_{0\neq M\in\mathcal{M}}T_Me\left(\sum_{r=1}^h\langle\mathbf{h}_r,B^{-l_r-1}\phi(P(z))\rangle\right)
  +\mathcal{O}\left(\sum_{r=1}^hF_{l_r}\right).
  \]
  Now an application of Proposition \ref{mani:prop:weyl} to treat the exponential
  sums, of Proposition \ref{mani:prop:border} for the border $F_l$ with
  $v\ll\log\log T$ and the observation that
\[
  \sum_{M\in\mathcal{M}}\lvert T_M\rvert\ll\kappa^{-2h}
  \ll\lvert\det B\rvert^{2hv}\ll(\log T)^{t\sigma_0/2},
\]
where we used the definition of $\kappa$ in (\ref{mani:Delta}), proves
the proposition.
\end{proof}

\section{Proof of Theorem \ref{thm:result}}\label{sec:proof-thm}


For this proof we mainly follow the proof of the Theorem of Bassily and K\'atai
\cite{Bassily_Katai1995:distribution_values_q}. In the same manner we cut of
the head and tail of the expansion and show the theorem for a truncated version
of the additive function. In particular we set $C:=\max(C_1,C_2)$, $A:=[C\log
L]$ and $B:=L-A$, where $L$, $C_1$ and $C_2$ are defined in the statement of
Proposition \ref{mani:centralprop}. Furthermore we define the truncated
function $f'$ to be
\[
  f'(P(z))=\sum_{j=A}^Bf(a_j(P(z))b^j).
\]
By the definition of $A$ and $f(ab^j)\ll1$ with $a\in\mathcal{N}$ we get that
$f'(P(z))=f(P(z))+\mathcal{O}(\log L)$. In the same manner we define the
truncated mean and standard deviation
\[
  M'(T):=\sum_{j=A}^Bm_j
  \quad\text{and}\quad
  D'^2(T):=\sum_{j=A}^B\sigma_j^2.
\]

At this point we need that the deviation $D$ tends sufficiently fast do
infinity. In particular, we could refine the statement, if we shrink the part,
which is cut of. Since $M(T)-M'(T)=\mathcal{O}(\log L)$ and
$D^2(T)-D'^2(T)=\mathcal{O}(\log L)$ we get that it suffices to
show that
\[
  \frac1{\#{\mathcal R}(\mathbf{T})}\#\left\{z\in {\mathcal R}(\mathbf{T})\middle\vert
  \frac{f'(P(z))-M'(T^d)}{D'(T^d)}<y\right\}
  \longrightarrow\Phi(y).
\]

By the Fr{\'e}chet-Shohat Theorem (\textit{cf.} \cite[Lemma 1.43]{MR551361})
this holds true if and only if the moments
\[
  \xi_k(T):=\frac1{\#{\mathcal R}(\mathbf{T})}\sum_{z\in {\mathcal R}(\mathbf{T})}\left(
  \frac{f'(P(z))-M'(T^d)}{D'(T^d)}\right)^k
\]
converge to the moments of the normal law for $T\to\infty$. We will show the
last statement by comparing the moments $\xi_k$ with
\[
  \eta_k(T):=\frac1{\#{\mathcal R}(\mathbf{T}^d)}\sum_{z\in N(T^d)}\left(
  \frac{f'(z)-M'(T^d)}{D'(T^d)}\right)^k,
\]
where $\mathbf{T}^d=(T_1^d,\ldots,T_n^d)=(T_{111}^d,\ldots,T_{t,n_t,m_t}^d)$.

An application of Proposition \ref{mani:centralprop} gives that
\[
  \xi_k(T)-\eta_k(T)\to0\quad\text{for}\quad T\to\infty.
\]

Furthermore we get by Proposition
\ref{prop:expansion:length:estimation} that these sums consist of
independently identically distributed random variables (with possible $2C$
exceptions). By the central limit theorem we get that their distribution
converges to the normal law. Thus the $\eta_k(T)$ converge to the moments of
the normal law. This yields
\[
  \lim_{T\to\infty}\xi_k(T)=
  \lim_{T\to\infty}\eta_k(T)=
  \int x^k\mathrm{d}\Phi.
\]

We apply the Fr{\'e}chet-Shohat Theorem again to prove the theorem.

\section*{Acknowledgment}
This paper was written while M. Madritsch was a visitor at the Faculty of
Informatics of the University of Debrecen. He thanks the
centre for its hospitality. During his stay he was supported by the project
HU 04/2010 founded by the \"OAD. The second author was supported by the
Hungarian National Foundation for Scientific Research Grant No.T67580 and by
the T\'AMOP 4.2.1/B-09/1/KONV-2010-0007 project. The second project is
implemented through the New Hungary Development Plan co-financed by the
European Social Fund, and the European Regional Development Fund.

\bibliographystyle{mine}

\begin{thebibliography}{10}

\bibitem{akiyama_borbely_brunotte+2005:generalized_radix_representations}
S.~Akiyama, T.~Borb{\'e}ly, H.~Brunotte, A.~Peth{\H{o}}, and J.~M. Thuswaldner,
  \emph{Generalized radix representations and dynamical systems. {I}}, Acta
  Math. Hungar. \textbf{108} (2005), no.~3, 207--238.

\bibitem{akiyama_rao2004:new_criteria_canonical}
S.~Akiyama and H.~Rao, \emph{New criteria for canonical number systems}, Acta
  Arith. \textbf{111} (2004), no.~1, 5--25.

\bibitem{Bassily_Katai1995:distribution_values_q}
N.~L. Bassily and I.~K{\'a}tai, \emph{Distribution of the values of
  {$q$}-additive functions on polynomial sequences}, Acta Math. Hungar.
  \textbf{68} (1995), no.~4, 353--361.

\bibitem{mr1470456}
M.~Drmota and R.~F. Tichy, \emph{Sequences, discrepancies and applications},
  Lecture Notes in Mathematics, vol. 1651, Springer-Verlag, Berlin, 1997.

\bibitem{mr1737265}
J.~M. Dumont, P.~J. Grabner, and A.~Thomas, \emph{Distribution of the digits in
  the expansions of rational integers in algebraic bases}, Acta Sci. Math.
  (Szeged) \textbf{65} (1999), no.~3-4, 469--492.

\bibitem{MR551361}
P.~D. T.~A. Elliott, \emph{Probabilistic number theory. {I}}, Grundlehren der
  Mathematischen Wissenschaften [Fundamental Principles of Mathematical
  Science], vol. 239, Springer-Verlag, New York, 1979, Mean-value theorems.

\bibitem{mr1796518}
B.~Gittenberger and J.~M. Thuswaldner, \emph{Asymptotic normality of
  {$b$}-additive functions on polynomial sequences in the {G}aussian number
  field}, J. Number Theory \textbf{84} (2000), no.~2, 317--341.

\bibitem{mr1348740}
K.~Gr{\"o}chenig and A.~Haas, \emph{Self-similar lattice tilings}, J. Fourier
  Anal. Appl. \textbf{1} (1994), no.~2, 131--170.

\bibitem{hua1982:introduction_to_number}
L.-K. Hua, \emph{Introduction to number theory}, Springer-Verlag, Berlin, 1982,
  Translated from the Chinese by Peter Shiu.

\bibitem{MR1189110}
I.~K{\'a}tai and I.~K{\"o}rnyei, \emph{On number systems in algebraic number
  fields}, Publ. Math. Debrecen \textbf{41} (1992), no.~3-4, 289--294.

\bibitem{kovacs_petho1991:number_systems_in}
B.~Kov{\'a}cs and A.~Peth{\H{o}}, \emph{Number systems in integral domains,
  especially in orders of algebraic number fields}, Acta Sci. Math. (Szeged)
  \textbf{55} (1991), no.~3-4, 287--299.

\bibitem{Kovacs_Petho1992:representation_algebraic_integers}
\bysame, \emph{On a representation of algebraic integers}, Studia Sci. Math.
  Hungar. \textbf{27} (1992), no.~1-2, 169--172.

\bibitem{Madritsch2007:note_on_normal}
M.~G. Madritsch, \emph{A note on normal numbers in matrix number systems},
  Math. Pannon. \textbf{18} (2007), no.~2, 219--227.

\bibitem{Madritsch:generating_normal_numbers}
\bysame, \emph{Generating normal numbers over {G}aussian integers}, Acta Arith.
  \textbf{135} (2008), no.~1, 63--90.

\bibitem{madritsch2010:asymptotic_normality_b}
\bysame, \emph{Asymptotic normality of {$b$}-additive functions on polynomial
  sequences in number systems}, Ramanujan J. \textbf{21} (2010), no.~2,
  181--210.

\bibitem{mueller01:_fract_proper_of_number_system}
W.~M{\"u}ller, J.~M. Thuswaldner, and R.~F. Tichy, \emph{Fractal properties of
  number systems}, Period. Math. Hungar. \textbf{42} (2001), no.~1-2, 51--68.

\bibitem{pethoe1991:polynomial_transformation_and}
A.~Peth{\H o}, \emph{On a polynomial transformation and its application to the
  construction of a public key cryptosystem}, Computational number theory
  ({D}ebrecen, 1989), de Gruyter, Berlin, 1991, pp.~31--43.

\bibitem{mr1900260}
K.~Scheicher and J.~M. Thuswaldner, \emph{Canonical number systems, counting
  automata and fractals}, Math. Proc. Cambridge Philos. Soc. \textbf{133}
  (2002), no.~1, 163--182.

\bibitem{Siegel1944:generalization_waring's_problem}
C.~L. Siegel, \emph{Generalization of {W}aring's problem to algebraic number
  fields}, Amer. J. Math. \textbf{66} (1944), 122--136.

\end{thebibliography}
\def\cprime{$'$}
\providecommand{\bysame}{\leavevmode\hbox to3em{\hrulefill}\thinspace}
\providecommand{\MR}{\relax\ifhmode\unskip\space\fi MR }
\providecommand{\MRhref}[2]{%
  \href{http://www.ams.org/mathscinet-getitem?mr=#1}{#2}
}
\providecommand{\href}[2]{#2}

\end{document}